\newcolumntype{C}[1]{>{\centering\arraybackslash$}p{#1}<{$}}
\def\<{\langle}
\def\>{\rangle}
\theoremstyle{plain}
\newtheorem{theorem}{Theorem}[section]
\newaliascnt{lemma}{theorem}
\newtheorem{lemma}[lemma]{Lemma}
\newaliascnt{proposition}{theorem}
\newtheorem{proposition}[proposition]{Proposition}
\newaliascnt{corollary}{theorem}
\newtheorem{corollary}[corollary]{Corollary}
\newaliascnt{conjecture}{theorem}
\theoremstyle{remark}
\newaliascnt{claim}{theorem}
\newtheorem*{claim*}{Claim}
\newtheorem*{remark}{Remark}
\theoremstyle{definition}
\newaliascnt{definition}{theorem}
\newtheorem{definition}[definition]{Definition}
\newaliascnt{example}{theorem}
\newaliascnt{notation}{theorem}
\newcommand{\colorF}[1]{{\color{blue}#1}}
\newcommand{\colorJ}[1]{{\color{red} #1}}
\newcommand{\colorL}[1]{{\color[rgb]{0,0.7,0} #1}}
\newcommand{\commF}[1]{\marginpar{\tiny\vskip-3ex\colorF{V: #1}}}
\newcommand{\commJ}[1]{\marginpar{\tiny\vskip-3ex\colorJ{J: #1}}}
\newcommand{\commL}[1]{\marginpar{\tiny\vskip-3ex\colorL{M: #1}}}
\newcommand{\comment}[1]{\marginpar{\tiny\vskip-3ex\color[rgb]{0.5,0.5,0.5} #1}}
\newcommand{\colorF}[1]{#1}
\newcommand{\colorJ}[1]{#1}
\newcommand{\colorL}[1]{#1}
\newcommand{\commF}[1]{}
\newcommand{\commJ}[1]{}
\newcommand{\commL}[1]{}
\newcommand{\comment}[1]{}
\title{On lexicographic representatives in braid monoids}
\author{\colorF{Ram\'on Flores}, \colorJ{Juan Gonz\'alez-Meneses}\footnote{Both authors partially supported by Spanish Project MTM2016-76453-C2-1-P and FEDER.}}
\date{August, 2018}
\begin{document}

\maketitle


\begin{abstract}
The language of maximal lexicographic representatives of elements in the positive braid monoid $A_n$ with $n$ generators is a regular language. We describe with great detail the smallest Finite State Automaton accepting such language, and study the proportion of elements of length $k$ whose maximal lexicographic representative finishes with the first generator. This proportion tends to some number $P_{n,1}$, as $k$ tends to infinity, and we show that $P_{n,1}\geq \frac{1}{8}$ for every $n\geq 1$. We also provide an explicit formula, based on the Fibonacci numbers, for the number of states of the automaton.
\end{abstract}

\section{Introduction}\label{S:Introduction}

The positive braid monoid with $n$ generators, $A_n$, also known as the spherical type Artin-Tits monoid of type $A_n$, or as the positive braid monoid on $n+1$ strands $\mathcal B_{n+1}$, is the monoid with the following presentation:
$$
   A_n=\left\langle a_1,\ldots,a_n \left| \begin{array}{cl} a_i a_j = a_j a_i & \mbox{ if } |j-i|>1 \\ a_i a_j a_i = a_j a_i a_j & \mbox{ if } |j-i|=1 \end{array} \right. \right\rangle
$$

Since the appearance of the seminal survey included in \cite{CEHLPT}, which in turn reunited different notions that have circulated some years around the group-theoretic community, there has been a growing interest in the role of Finite State Automata in Group Theory. In particular, in the context of Artin groups and monoids, different automata have been proposed, see for example \cite{Cha92} or \cite{Jug16}.

In the present paper, we deeply analyze a Finite State Automaton proposed by the second author and V. Gebhardt in~\cite{GG}. In that paper, a polynomial algorithm is given to select a random element of $A_n$, among all elements of given length, with uniform probability. The difficulty of this task comes from the fact that a given positive braid $\beta$ may admit many different words in the generators $a_1,\ldots,a_n$ representing it (although all representatives have the same length, as the relations in $A_n$ are homogeneous). In order to be able to count the elements in $A_n$ of length $k$, one can choose a unique representative for each element. For instance, one can order lexicographically the (finite set of) words representing $\beta$, setting $a_1<a_2<\cdots <a_n$, and choose the smallest element. With this ordering, it is shown in~\cite{GG} that the language $L_{n}$ of smallest lexicographic representatives of elements of $A_n$ is a regular language (a fact that was previously known), and the mentioned Finite State Automaton $\Gamma_{n}$ accepting this language is defined. It is proved in the mentioned paper that $\Gamma_{n}$ is the smallest (in terms of number of states) Deterministic Finite State Automaton accepting $L_{n}$. Actually, in~\cite{GG}, the language and the automaton for $A_n$ are denoted $L_{n+1}$ and $\Gamma_{n+1}$, respectively, as the index indicates the number of strands instead of the number of generators, but in the present paper we prefer to shift the indices to simplify the notations.

Our immediate motivation to study the mentioned Finite State Automaton comes from our paper~\cite{FG}. One of our interests there is to determine the limit of the growth rates of the monoids $A_n$, as $n$ tends to infinity, a computation that in particular builds new and exciting bridges between Group Theory and Combinatorics in one side, and Real Analysis and Modular Forms theory on the other, via the partial theta functions. In our computation, it turns out to be necessary to describe the proportion of lexicographic representatives in the monoid $A_n$ that finish with a given generator. According to the convention in~\cite{FG}, we will consider the {\it biggest} lexicographic representatives instead of the smallest (as in~\cite{GG}), and call $\mathcal L_n$ the corresponding language. It is easy to see that the smallest Finite State Automaton $\Gamma_n$ accepting $L_n$, studied in~\cite{GG}, coincides with the smallest Finite State Automaton accepting $\mathcal L_n$, just replacing each $a_i$ with $a_{n+1-i}$ in all transitions. More information in \autoref{S:local description} below.

From now on, $\Gamma_n$ will denote the smallest Deterministic Finite State Automaton accepting the language of {\it biggest} lexicographic representatives in $A_n$, with $a_1<a_2<\cdots< a_n$. It will be the detailed description of this automaton $\Gamma_n$ which will allow us to compute the desired proportion, following a reasoning that we briefly sketch now.

The proportion of elements in the language $\mathcal L_n$  finishing with a given generator has a sense only if we fix the length of the words, so let $\mathcal L_{n,k}$ be the (finite) set of words in $\mathcal L_n$ having length $k$. We see that $\mathcal L_{n,0}=\{\epsilon\}$ (the empty word) and $\mathcal L_{n,1}=\{a_1,\ldots,a_n\}$. Also,
$$
   \mathcal L_{n,2}=\{a_1a_1, a_1a_2\}\cup \{a_2a_1, a_2a_2, a_2a_3\} \cup \cdots \cup \{a_na_1, a_na_2,\cdots, a_na_n\}.
$$
Notice that the words $a_ia_j$ with $j\geq i+2$ do not appear, as in that case $a_ia_j=a_ja_i$ in $A_n$, and the latter is the biggest lexicographic representative. The case of $\mathcal L_{n,3}$ is more involved, as we must also take into account the relations $a_ia_{i+1}a_i=a_{i+1}a_ia_{i+1}$.

Recall that $\mathcal L_n$ is a regular language. This implies that, when $k$ tends to infinity, the proportion of words in $\mathcal L_{n,k}$ which end at a given state $s$ of the automaton tends to a well defined limit $0\leq p_s <1$. The set of numbers $p_s$ for all states $s$ in $\Gamma_n$ is called the stationary distribution of $\Gamma_n$. We can understand $p_s$ as the probability that a very long word in $\mathcal L_n$ finishes at the state $s$ (considering all words of given length in $\mathcal L_n$ with uniform probability).

We will see that all words in $\mathcal L_n$ which end at a given state in $\Gamma_n$ finish with the same letter. That is, if we see $\Gamma_n$ as a directed graph, whose arrows are labeled by the generators $a_1,\ldots,a_n$, all the incoming arrows of a given state $s$ have the same label. It follows that, when $k$ tends to infinity, the proportion of words of length $k$ finishing with $a_i$, tends to the sum of the numbers $p_s$ for all states $s$ whose incoming arrows are labeled by $a_i$. This limit is thus well defined, and it is the probability that a very long word in $\mathcal L_n$ finishes with $a_i$. Let us denote it by $P_{n,i}$.

It seems natural that for each $n>2$, the proportion $P_{n,1}$ is greater than $P_{n,i}$ for $i\geq 1$. The question arises about how far is this proportion from the uniform distribution $\frac{1}{n}$. It turns out that the lexicographic representatives ending with $a_1$ are much more abundant than one could think, since the proportion $P_{n,1}$ has a uniform lower bound, independent of $n$. We can show the following:

\autoref{T:generic_braids_end_with_a1}.
{\it Let $P_{n,1}$ be the limit, when $k$ tends to infinity, of the proportion of maximal lexicographic representatives of length $k$ in $A_n$ finishing with $a_1$. Then $P_{n,1}>\frac{1}{8}$ for every $n\geq 1$.
}

Our exhaustive description of the automaton $\Gamma_n$ also permits to compute in an exact way the number of states, by means of a formula which surprisingly depends on the Fibonacci numbers.

\autoref{T:numer_of_states} {\it Let $F_n$ be the $n$-th Fibonacci number $(F_0=0$, $F_1=1$, $F_{i+1}=F_i+F_{i-1})$. Let $s_n$ be the number of states of the Finite State Automaton $\Gamma_n$ accepting the language of maximal (or minimal) lexicographic representatives of elements in the positive braid monoid $A_n$. Then
$$
   s_n=\sum_{i=1}^n{\left({n+1-i \choose 2}+1\right)\cdot F_{2i}}
$$
}

Note that it is shown in \cite[Corollary 4.17]{GG} that $\Gamma_n$ has at least $2^{n-1}$ states, but no upper bound for the number of states is given there.

We finish the Introduction by briefly explaining the contents of the paper. In \autoref{S:local description} we present the automaton and describe some easy properties of it, while in \autoref{S:description}, the core of the paper, a complete analysis of it is undertaken. \autoref{S:size} is devoted to the computation of the number of states of the automaton; and \autoref{S:asymptotic}, which concludes the paper, makes use of the previous results in the paper and some linear algebra from Perron-Frobenius theory, to obtain asymptotic properties of the braid monoids, and in particular the desired results concerning proportions. We end the paper with an appendix showing how to compute the incidence matrix of the automaton $\Gamma_n$, and providing the numerical results of our computations.

\section{The automaton}\label{S:local description}

A standard reference about finite state automata in group theory is \cite{CEHLPT}, and a modern approach can be found in \cite{HRR17}.

In this paper, a finite state automaton $\Gamma$ will be given as a directed graph, whose vertices will be called {\it states}, and whose arrows, called {\it transitions}, are labeled by letters from a finite alphabet $\{a_1,\ldots,a_n\}$. There will be one special vertex called the {\it initial state}. $\Gamma$ is a Deterministic Finite State Automaton (DFSA) if for every state $s$ and every $i\in \{1,\ldots,n\}$, there is at most one arrow starting at $s$ and labeled $a_i$. All states in $\Gamma$ will be considered to be {\it accepted}.

A {\it path} $\alpha$ in $\Gamma$ is a sequence of arrows $\alpha_1,\ldots,\alpha_t$ such that the target of $\alpha_i$ equals the source of $\alpha_{i+1}$, for $i=1,\ldots,t-1$. The source of the path $\alpha$ is the source of $\alpha_1$, and the target of $\alpha$ is the target of $\alpha_t$. The word associated to $\alpha$ is the concatenation of the labels of $\alpha_1,\alpha_2,\ldots,\alpha_t$. A word $w$ in the alphabet $\{a_1,\ldots,a_n\}$ is {\it accepted} by $\Gamma$ if there is a path in $\Gamma$ whose source is the initial state, and whose associated word is $w$. The set of accepted words form the {\it language} accepted by $\Gamma$.

Given the way in which we are going to deal with the natural inclusions $A_1\subset A_2 \subset A_3 \subset \cdots$ and the notation of \cite{FG}, it will be convenient for us to consider the {\it biggest} lexicographic representatives, instead of the smallest as in \cite{GG}. This is not a big issue, as the map $\rho$ sending $a_i$ to $a_{n+1-i}$ is a monoid automorphism of $A_n$ (as it can be immediately deduced from the monoid presentation) and, applied to words, it sends the smallest lexicographic representative of a braid $\beta$ to the biggest lexicographic representative of $\rho(\beta)$, and vice-versa. In other words, the language of biggest representatives $\mathcal L_n$ for elements in $A_n$ with $a_1<a_2<\cdots <a_n$ corresponds (via $\rho$) to the language of smallest representatives, also with $a_1<a_2<\cdots <a_n$. So, for every $n\geq 1$, we will use the definition of the DFSA in~\cite{GG} to describe a DFSA $\Gamma_n$, with alphabet $\{a_1,\ldots,a_n\}$, whose accepted language is the set $\mathcal L_n$ of biggest lexicographic representatives of elements of $A_n$.

Recall that $A_n$ is the monoid of positive braids with $n$ generators. In this monoid we say that $\beta_1$ is a prefix of $\beta$, and we write $\beta_1\preccurlyeq \beta$, if there exists $\beta_2\in A_n$ such that $\beta=\beta_1\beta_2$.
Given a braid $\beta\in A_n$, we denote by $\omega(\beta)\in \mathcal L_n$ its biggest lexicographic representative. Conversely, given a word $w$ in the alphabet $\{a_1,\ldots,a_n\}$, we will denote by $\overline w\in A_n$ the braid it represents. It is clear that $\overline{\omega(\beta)}=\beta$ for every $\beta\in A_n$, and that $\omega(\overline{w})=w$ if and only if $w\in \mathcal L_n$.

Given a word $w$ in the alphabet $\{a_1,\ldots,a_n\}$, we define the {\it support} of $w$, denoted $\mbox{supp}(w)$, as the set of letters which appear in $w$. Notice that applying a relation from the presentation of $A_n$ does not change the support of a word. It follows that all words representing a given braid $\beta$ have the same support, so we can talk about the {\it support} of $\beta$, denoted $\mbox{supp}(\beta)$, as the support of any of its representatives. Also, applying a relation from the presentation of $A_n$ does not change the length of a word, so all representatives of a braid have the same length, and we can talk about the {\it length} $|\beta|$ of a braid $\beta$, as the length of any of its representatives.

Given $\beta\in A_n$, we say that a decomposition $\beta=\beta_1\beta_2$ is {\it permitted} if $\omega(\beta) =\omega(\beta_1) \omega(\beta_2)$, and is {\it forbidden} otherwise. Notice that, if $|\beta|=k$, there is exactly one permitted decomposition of $\beta$ for each number $k_1=0,\ldots,k$. Indeed, the word $w=\omega(\beta)$ has a unique prefix $w_1$ of length $k_1$. Hence, if we write $w=w_1 w_2$, we have that $\beta=\overline{w_1}\; \overline{w_2}$ is the only permitted decomposition whose left factor has length $k_1$.

For example, if we consider the braid $\beta = a_1a_2a_1\in A_n$, the decomposition given by $\beta_1=a_1$ and $\beta_2=a_2a_1$ is forbidden, because $\omega(\beta_1)=a_1$, $\omega(\beta_2)=a_2a_1$, but $\omega(\beta)=a_2a_1a_2$, which is not the same word as $\omega(\beta_1) \omega(\beta_2) = a_1a_2a_1$. The only permitted decompositions of $\beta$ are those obtained from prefixes of its lexicographic representative $a_2a_1a_2$.

Notice that the biggest lexicographic representative of a braid $\beta$ is determined by its permitted decompositions. Hence, it is also determined by its forbidden decompositions.

If a decomposition $\beta=\beta_1\beta_2$ is forbidden, we say that $\beta_2$ is {\it forbidden after} $\beta_1$. It is important to notice that if $\beta_2$ is forbidden after $\beta_1$, then $\beta_2\beta_3$ is also forbidden after $\beta_1$, for every $\beta_3\in A_n$. Therefore, in order to determine all braids which are forbidden after $\beta_1$, it suffices to know those which are {\it minimal} with respect to the prefix order. This justifies the following definition.

\begin{definition}\cite{GG}
Given a braid $\beta_1\in A_n$, the set of {\it minimal forbidden prefixes} after $\beta_1$ is:
$$
   F_n(\beta_1)=\min_{\preccurlyeq}\{\beta_2\in A_n;\ \omega(\beta_1\beta_2)\neq \omega(\beta_1)\omega(\beta_2)\}.
$$
\end{definition}

As we mentioned before, $F_n(\beta_1)$ determines the whole set of forbidden braids after $\beta_1$, which in turn determines the set of words in $\mathcal L_n$ that start with $\omega(\beta_1)$. The advantage of this notion is that there is only a finite number of sets of the form $F_n(\beta_1)$, as we will see. Therefore, we can construct a DFSA which accepts the language $\mathcal L_n$ in the following way:

\begin{definition}\cite{GG}
The automaton $\Gamma_n$ is a directed graph defined as follows.
\begin{itemize}
 \item The vertices of $\Gamma_n$ correspond to the sets of the form $F_n(\beta_1)$ for $\beta_1\in A_n$.

 \item Whenever $a_j\notin F_n(\beta_1)$, there is an arrow labeled $a_j$ whose source is $F_n(\beta_1)$ and whose target is $F_n(\beta_1 a_j)$.
\end{itemize}
\end{definition}

\begin{theorem}{\rm \cite{GG}}\label{T:automaton}
$\Gamma_n$ is a well defined DFSA, whose accepted language is $\mathcal L_n$. Moreover, it is the smallest DFSA (in terms of number of states) accepting $\mathcal L_n$.
\end{theorem}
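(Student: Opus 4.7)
The plan is to split the theorem into four claims: (i) the transitions of $\Gamma_n$ are well-defined, i.e., depend only on the state $F_n(\beta_1)$ and not on the particular braid $\beta_1$; (ii) the vertex set of $\Gamma_n$ is finite; (iii) the accepted language is exactly $\mathcal L_n$; and (iv) $\Gamma_n$ is the smallest DFSA accepting $\mathcal L_n$. Determinism is immediate from the construction.

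The central conceptual step I would introduce up front is the \emph{tail language} $T(\beta_1) = \{w : \omega(\beta_1)\, w \in \mathcal L_n\}$. Unwinding definitions, $w \in T(\beta_1)$ if and only if $w = \omega(\overline{w})$ and $\overline{w}$ is permitted after $\beta_1$, so $T(\beta_1) = \{\omega(\beta_2) : \beta_2 \text{ is permitted after } \beta_1\}$. Because the set of braids permitted after $\beta_1$ is the complement of those having some prefix in $F_n(\beta_1)$, and conversely one recovers $F_n(\beta_1)$ as the minimal forbidden elements, we obtain the equivalence $F_n(\beta_1) = F_n(\beta_1')$ if and only if $T(\beta_1) = T(\beta_1')$. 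Thus the states of $\Gamma_n$ are in canonical bijection with the Myhill--Nerode equivalence classes of prefixes of $\mathcal L_n$, which simultaneously settles (i) and (iv).

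More explicitly, for (i), when $a_j \notin F_n(\beta_1) = F_n(\beta_1')$ one has $\omega(\beta_1 a_j) = \omega(\beta_1)\, a_j$, and the tail language at the new state is the $a_j$-derivative $\{w : a_j w \in T(\beta_1)\}$; this depends only on $T(\beta_1)$ and $a_j$, so $T(\beta_1 a_j) = T(\beta_1' a_j)$ and therefore $F_n(\beta_1 a_j) = F_n(\beta_1' a_j)$. For (iii), I would induct on word length: $a_{i_1}\cdots a_{i_k}$ labels a path from $F_n(e)$ iff at each stage $j$ we have $a_{i_j} \notin F_n(\overline{a_{i_1}\cdots a_{i_{j-1}}})$. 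Since the only proper prefix of the length-one braid $a_{i_j}$ is the identity, which is never forbidden, this condition is equivalent to $a_{i_j}$ being permitted after $\overline{a_{i_1}\cdots a_{i_{j-1}}}$, i.e., to $\omega(\overline{a_{i_1}\cdots a_{i_j}}) = \omega(\overline{a_{i_1}\cdots a_{i_{j-1}}})\, a_{i_j}$; iterating yields $a_{i_1}\cdots a_{i_k} \in \mathcal L_n$. For (iv), the Myhill--Nerode theorem together with the above bijection shows any DFSA accepting $\mathcal L_n$ must have at least as many states as $\Gamma_n$.

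The main obstacle, and the only step requiring work specific to the braid monoid, is (ii). I would approach it by proving a uniform upper bound (depending only on $n$) on the length of any minimal forbidden prefix $\beta_2 \in F_n(\beta_1)$: then each $F_n(\beta_1)$ is a subset of the finite set of braids of at most that length, and only finitely many such subsets exist. The expected bound should come from the Garside structure of $A_n$: the only rewriting moves that can change $\omega$ are the braid relations $a_i a_{i+1} a_i = a_{i+1} a_i a_{i+1}$ and $a_i a_j = a_j a_i$ for $|i-j|>1$, and a minimal forbidden extension should correspond to such a move fitting inside a simple (Garside) divisor of $\Delta_n$. Since the complete description of these minimal forbidden prefixes is precisely the content of \autoref{S:description} below, at this stage I would either cite \cite{GG} directly or defer to the detailed analysis to follow.
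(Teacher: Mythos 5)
The paper offers no proof of this statement to compare against: it is quoted directly from \cite{GG}, and the only in-paper justification is that citation. Your reconstruction is essentially correct and is the argument one would expect. The tail-language framing is the right one: since the set of braids forbidden after $\beta_1$ is upward closed and well-founded under $\preccurlyeq$, it equals the upward closure of $F_n(\beta_1)$, which gives your equivalence $F_n(\beta_1)=F_n(\beta_1')\Leftrightarrow T(\beta_1)=T(\beta_1')$; the identity $T(\beta_1 a_j)=\{w: a_jw\in T(\beta_1)\}$ then settles well-definedness, and Myhill--Nerode gives minimality (under the paper's convention of a partial DFSA with every state accepting, so no sink state is needed). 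Your argument for (iii) is also fine, resting on the facts that prefixes of words in $\mathcal L_n$ are again in $\mathcal L_n$ and that a length-one braid is minimal forbidden iff it is forbidden. The only non-generic ingredient is the one you correctly isolate: finiteness of $\{F_n(\beta):\beta\in A_n\}$. Your Garside heuristic is plausible but is not a proof as written; the actual reason is that every element of $F_n(\beta)$ has the form $a_ia_{i+1}\cdots a_k$ or $a_{j+1}a_j$, hence length at most $n$, which is the content of \autoref{P:Transitions} --- itself imported from \cite{GG}. Deferring to that result is legitimate and not circular, since it describes the sets $F_n(\beta)$ directly without presupposing the automaton; so your proposal, with that citation made explicit, is a complete and correct route to the theorem.
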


We have an immediate consequence of the above result:

\begin{corollary}\label{C:permitted_decomposition}
Let $\beta_1,\beta_1',\beta_2\in A_n$. If $\beta_1\beta_2$ is a permitted decomposition, and $F_n(\beta_1)=F_n(\beta_1')$, then $\beta_1'\beta_2$ is a permitted decomposition and $F_n(\beta_1'\beta_2)=F_n(\beta_1\beta_2)$.
\end{corollary}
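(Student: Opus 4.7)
The plan is to exploit two facts established earlier in the section: first, the set of braids forbidden after $\beta_1$ is the upward closure of $F_n(\beta_1)$ under $\preccurlyeq$ (since, as the paper notes, if $\beta_2$ is forbidden after $\beta_1$ then so is $\beta_2\beta_3$ for every $\beta_3$); second, by \autoref{T:automaton}, $\Gamma_n$ is a well-defined DFSA for which the state $F_n(\gamma)$ is precisely the state reached by reading $\omega(\gamma)$ from the initial state.

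First, I would establish that $\beta_1'\beta_2$ is a permitted decomposition. By the definition of $F_n$, ``$\beta_1\beta_2$ permitted'' is equivalent to ``$\beta_2$ is not forbidden after $\beta_1$'', which in turn is equivalent to ``no element of $F_n(\beta_1)$ is a prefix of $\beta_2$'' (this uses the upward-closure observation above). Since the last condition depends only on $F_n(\beta_1)$ and $\beta_2$, the hypothesis $F_n(\beta_1)=F_n(\beta_1')$ immediately yields that $\beta_2$ is not forbidden after $\beta_1'$ either, so $\beta_1'\beta_2$ is permitted.

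Next, I would establish the equality $F_n(\beta_1'\beta_2)=F_n(\beta_1\beta_2)$ using determinism of $\Gamma_n$. Because $\beta_1\beta_2$ is permitted, $\omega(\beta_1\beta_2)=\omega(\beta_1)\omega(\beta_2)$, so the accepting path for $\omega(\beta_1\beta_2)$ in $\Gamma_n$ factors: from the initial state, reading $\omega(\beta_1)$ lands at $F_n(\beta_1)$, and then reading $\omega(\beta_2)$ ends at $F_n(\beta_1\beta_2)$. Applying the same factorization to $\beta_1'\beta_2$ (legitimate thanks to the first step), the corresponding accepting path passes through $F_n(\beta_1')=F_n(\beta_1)$ and then, by determinism of $\Gamma_n$, reads $\omega(\beta_2)$ to arrive at the very same state. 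Hence $F_n(\beta_1'\beta_2)=F_n(\beta_1\beta_2)$.

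The main obstacle is purely conceptual rather than technical: one must recognize that ``being permitted'' is a property of the pair $(F_n(\beta_1),\beta_2)$ and not of $\beta_1$ itself, and that \autoref{T:automaton} packages everything else one needs. Once those observations are in place, the argument is a single line of bookkeeping about paths in $\Gamma_n$, with no calculations in the braid monoid required.
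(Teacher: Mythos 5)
Your proof is correct and follows essentially the same route as the paper's: both arguments rest on reading $\omega(\beta_1')\omega(\beta_2)$ in the deterministic automaton of \autoref{T:automaton} and observing that the walk labeled $\omega(\beta_2)$ out of the common state $F_n(\beta_1)=F_n(\beta_1')$ ends at the same state $F_n(\beta_1\beta_2)$. The only (harmless) difference is that you establish permittedness of $\beta_1'\beta_2$ beforehand, from the fact that the set of braids forbidden after $\beta_1$ is the upward closure of $F_n(\beta_1)$, whereas the paper obtains it for free from the existence of the accepting path labeled $\omega(\beta_1')\omega(\beta_2)$.
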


\begin{proof}
Let $\beta=\beta_1\beta_2$, $w=\omega(\beta)$, $w_1=\omega(\beta_1)$ and $w_2=\omega(\beta_2)$. Since the decomposition $\beta=\beta_1\beta_2$ is permitted, we have $w=w_1w_2$. We can then read the word $w_1w_2$ in the automaton $\Gamma_n$, starting at the initial state. After reading $w_1$, we are placed at the state $F_n(\beta_1)$. The final state ($F_n(\beta)$) only depends on the current state $F_n(\beta_1)$, and the remaining part of the word, $w_2$.

Now let $w_1'=\omega(\beta_1')$. Since $F_n(\beta_1)=F_n(\beta_1')$, it follows that there is a path in $\Gamma_n$ labeled by $w_1'$, starting at the initial state and finishing at $F_n(\beta_1)$. If we continue reading the word $w_2$, we will finish at the state $F_n(\beta)$. Hence, there is a path in $\Gamma_n$ labeled by $w_1'w_2$, starting at the initial state and finishing at $F_n(\beta)$. This implies that $w_1'w_2\in \mathcal L_n$, that $\beta_1'\beta_2$ is permitted, and that $F_n(\beta_1'\beta_2)=F_n(\beta)=F_n(\beta_1\beta_2)$.
\end{proof}

\section{Description of the automaton}\label{S:description}

We want to describe the automaton $\Gamma_n$ in detail. Hence, we will describe the states $F_n(\beta_1)$, and the transitions $F_n(\beta_1)\stackrel{a_j}{\longrightarrow} F_n(\beta_1a_j)$.

First of all, it is clear that $F_n(1)=\emptyset$. Then, one can compute all possible sets of minimal forbidden prefixes, with the help of the following crucial result:

\begin{proposition}{\rm \cite{GG}}\label{P:Transitions}
  Let $\beta_1\in A_n$, and suppose that $a_j\notin F_n(\beta_1)$, so there is an arrow $F_n(\beta_1)\stackrel{a_j}{\longrightarrow} F_n(\beta_1a_j)$ in the automaton $\Gamma_n$. One has:
\begin{enumerate}

\item If $a_i\in F_n(\beta_1)$ for some $i<j-1$, then $a_i\in F_n(\beta_1a_j)$.

\item If $a_i a_{i+1}\cdots a_{j-1}\in F_n(\beta_1)$ for some $i<j$, then  $a_i a_{i+1}\cdots a_{j}\in F_n(\beta_1 a_j)$.

\item If $a_ia_{i+1}\cdots a_k\in F_n(\beta_1)$, with $i<j\leq k$, then $a_ia_{i+1}\cdots a_k\in F_n(\beta_1 a_j)$.

\item If $a_ja_{j-1}\in F_n(\beta_1)$, then $a_{j-1}\in F_n(\beta_1a_j)$.

\item If $a_ja_{j+1}\cdots a_k\in F_n(\beta_1)$, with $j<k$, then $a_{j+1}\cdots a_k\in F_n(\beta_1a_j)$.

\item If $a_ja_{j+1}\notin F_n(\beta_1)$, then $a_{j+1}a_j \in F_n(\beta_1 a_j)$ (provided $j\leq n-1$).

\item $a_{j+2},a_{j+3},\ldots, a_n \in F_n(\beta_1a_j)$ (provided $j\leq n-2$).

\end{enumerate}

Moreover, all elements of $F_n(\beta_1a_j)$ are obtained in this way.

\end{proposition}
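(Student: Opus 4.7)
The plan is to establish both directions of the characterization: (i) each element produced by Cases~1--7 belongs to $F_n(\beta_1 a_j)$, and (ii) these cases exhaust $F_n(\beta_1 a_j)$.

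For (i), the starting observation is that $a_j\notin F_n(\beta_1)$ makes the decomposition $\beta_1\cdot a_j$ permitted, hence $\omega(\beta_1 a_j)=\omega(\beta_1)\,a_j$. For each listed candidate $\gamma$, I would verify two things: first, \emph{forbiddenness}, by exhibiting a representative of $\beta_1 a_j\gamma$ that is lexicographically larger than $\omega(\beta_1)\,a_j\,\omega(\gamma)$; and second, \emph{minimality}, by ruling out that any proper braid-prefix of $\gamma$ is already forbidden. The larger representative in each case is produced using only the two monoid relations, the commutation $a_k a_l=a_l a_k$ for $|k-l|>1$ and the braid relation $a_k a_{k+1} a_k=a_{k+1}a_k a_{k+1}$. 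Cases~1--3 are inherited from forbidden prefixes of $F_n(\beta_1)$ that start with some $a_i$ with $i<j$: depending on the interaction of the chain with $a_j$, the forbidden prefix persists unchanged (Cases~1 and~3) or extends by one letter via the braid relation involving $a_j$ (Case~2). Cases~4 and~5 are inherited from forbidden prefixes beginning with $a_j$, which upon reading $a_j$ lose their first letter and shorten. Cases~6 and~7 are the genuinely new boundary phenomena produced by $a_j$ itself: the braid-relation pattern $a_j a_{j+1} a_j$ (with the hypothesis $a_j a_{j+1}\notin F_n(\beta_1)$ ensuring that $a_{j+1}$ alone remains permitted, making $a_{j+1}a_j$ the minimal new witness), and the commutations $a_ja_k=a_ka_j$ with $a_k>a_j$ for $k\geq j+2$.

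For (ii), I would take an arbitrary $\gamma\in F_n(\beta_1 a_j)$ and split on the first letter $a_k$ of $\omega(\gamma)$ relative to $j$. For $k\geq j+2$, the commutation $a_j a_k=a_ka_j$ with $a_k>a_j$ immediately places $\gamma=a_k$ into Case~7 by minimality. For $k=j+1$, either $\gamma=a_{j+1}a_j$ from Case~6, or $\gamma$ is a suffix coming from a chain $a_j a_{j+1}\cdots a_l$ already in $F_n(\beta_1)$ (Case~5). For $k=j$, the hypothesis $a_j\notin F_n(\beta_1 a_j)$ rules out $\gamma=a_j$, and minimality together with the braid/commutation structure forces $\gamma$ into the situations just discussed. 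Finally, for $k<j$ I would lift $\gamma$ to a forbidden prefix for $\beta_1$ using \autoref{C:permitted_decomposition} and the interplay of $a_k$ with $a_j$, identifying $\gamma$ with an element from Cases~1, 2, or~3 according to how far any chain $a_k a_{k+1}\cdots$ inside $\gamma$ extends relative to $j$.

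The main obstacle I anticipate is in the chain cases (Cases~2, 3, 5), where a forbidden prefix in $F_n(\beta_1)$ has the form $a_i a_{i+1}\cdots a_k$ and its interaction with the new letter $a_j$ depends delicately on whether $j$ lies below, inside, or above the interval $[i,k]$. In particular, verifying minimality in Case~2 (where the forbidden element \emph{grows} by one letter rather than staying the same or shrinking) and correctly extracting the shortened element in Cases~4 and~5 without overlapping with the length-one witnesses of Case~1 will require careful bookkeeping using the precise structure of the biggest lexicographic representative and the two monoid relations; the mutual exclusivity of the relevant hypotheses (for instance $a_{j-1}\in F_n(\beta_1)$ versus $a_j a_{j-1}\in F_n(\beta_1)$) is what ultimately keeps the enumeration consistent.
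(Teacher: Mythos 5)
The paper offers no proof of this proposition: it is imported verbatim from \cite{GG} (the citation is part of the statement), so there is nothing internal to compare your argument against. Judged on its own terms, your text is a roadmap rather than a proof. Every substantive verification is deferred (``I would verify\dots'', ``The main obstacle I anticipate\dots''), and the deferred steps are precisely where the work lies. For the forward direction, exhibiting a lexicographically larger representative of $\beta_1 a_j\gamma$ is not a one-line application of a relation: one must track where the first differing position between $\omega(\beta_1\delta)$ and $\omega(\beta_1)\omega(\delta)$ occurs (and rule out, e.g., via left cancellativity, that it sits in the appended block), and the minimality checks in Cases 2, 4, 5, 6 require knowing that the relevant proper prefixes of the candidate are \emph{permitted} after $\beta_1 a_j$, which in turn needs structural information about $F_n(\beta_1)$ beyond the single hypothesis of each case. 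You correctly flag this as the hard part but do not resolve it; in \cite{GG} it occupies a chain of lemmas.

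Two concrete gaps. First, in the exhaustiveness direction you invoke ``the hypothesis $a_j\notin F_n(\beta_1 a_j)$'' to dispose of the case $k=j$; there is no such hypothesis (the hypothesis is $a_j\notin F_n(\beta_1)$), and the fact that no element of $F_n(\beta_1 a_j)$ starts with $a_j$ is itself part of what must be proved — it is exactly the assertion that none of Cases 1--7 is needed for first letter $a_j$, and ``minimality together with the braid/commutation structure forces $\gamma$ into the situations just discussed'' does not address it, since those situations concern first letters $\geq j+1$. Second, your plan for $k<j$ leans on \autoref{C:permitted_decomposition}, but that corollary is derived from \autoref{T:automaton}, whose proof (in \cite{GG}) rests on the present proposition; using it here would be circular in any self-contained development. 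The overall architecture you describe (forbiddenness witness via the two monoid relations, then minimality, then a case split on the first letter for completeness) is the right one, but as written the proposal establishes none of the seven cases nor the ``moreover'' clause.
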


Let us think about the conditions in \autoref{P:Transitions}. First of all, they imply that all elements in $F(\beta_1a_j)$ are of the form $a_ia_{i+1}\cdots a_k$ for some $i\leq k$, with the only possible exception of the braid $a_{j+1}a_j$. Therefore, as these braids can only be written in a unique way, it makes sense to talk about {\it the starting letter} of such a braid. Conditions~1 to 4 tell us which elements in $F(\beta_1 a_j)$ start with a letter smaller than $a_j$. Such an element is either a single letter or an increasing sequence of consecutive letters whose final letter is at least $a_j$. Conditions~5 and 6 produce the elements in $F_n(\beta_1a_j)$ starting with $a_{j+1}$. Actually, Condition~6 tells us that $a_{j+1}a_j$ is a minimal forbidden prefix, except when $j=n$ or when Condition~5 produces a single letter: in this case $a_{j+1}$ is forbidden and hence $a_{j+1}a_j$ is not minimal. Finally, Condition~7 tells us that the letters $a_{j+2},a_{j+3},\ldots,a_{n}$ always belong to $F_n(\beta_1 a_j)$, provided they exist.

%
%
%
%
%
%

Let us deduce an important property from \autoref{P:Transitions}, i.e. that all arrows in $\Gamma_n$ whose target is a given state have the same label. We can show that the final letter of a word $\omega(\beta)$ depends only on the set $F_n(\beta)$, and not on $\beta$ itself:

\begin{proposition}
Let $F=F_n(\beta)$ be the set of minimal forbidden prefixes after some nontrivial $\beta\in A_n$. Let $t$ be the biggest index such that there is no element in $F$ starting with $a_t$. Then $a_t$ is the final letter of $\omega(\beta)$.
\end{proposition}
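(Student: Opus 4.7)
The plan is to show that the $a_j$ characterized in the statement coincides with the final letter of $\omega(\beta)$. So let $a_j$ be the final letter of $\omega(\beta)$, write $\omega(\beta)=w_1 a_j$, and set $\beta_1=\overline{w_1}$. Then $\beta=\beta_1 a_j$ is a permitted decomposition, so in $\Gamma_n$ there is a transition $F_n(\beta_1)\stackrel{a_j}{\longrightarrow} F_n(\beta_1 a_j)=F$. It then remains to prove two things: (a) no element of $F$ starts with $a_j$; and (b) for every $i$ with $j<i\leq n$, some element of $F$ does start with $a_i$. Together, (a) and (b) say precisely that $j$ is the biggest index with no element of $F$ starting with $a_j$.

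Both (a) and (b) should follow directly from \autoref{P:Transitions}, which lists every element of $F=F_n(\beta_1 a_j)$ according to Conditions~1--7. For (a), I would inspect the starting letter produced by each of these conditions. Conditions~1 to 4 yield elements starting with some $a_i$ with $i<j$: indeed, Condition~1 requires $i<j-1$, Conditions~2 and 3 require $i<j$, and Condition~4 yields the single letter $a_{j-1}$. Conditions~5, 6, 7 instead yield elements starting with some $a_i$ with $i>j$, namely $a_{j+1}$ from Conditions~5 and 6, and the singletons $a_{j+2},\ldots,a_n$ from Condition~7. In no case does the starting letter equal $a_j$.

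For (b) there is nothing to prove if $j=n$, so assume $j<n$. Condition~7 already supplies the singletons $a_{j+2},\ldots,a_n$ as elements of $F$, so every index in $\{j+2,\ldots,n\}$ occurs as a starting letter. For the remaining index $i=j+1$ I would split according to whether $a_j a_{j+1}\in F_n(\beta_1)$: if so, Condition~5 with $k=j+1$ gives $a_{j+1}\in F$; if not, Condition~6 gives $a_{j+1}a_j\in F$. Either way, some element of $F$ starts with $a_{j+1}$, finishing (b).

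Combining (a) and (b), the biggest index $t$ such that no element of $F$ starts with $a_t$ is exactly $j$, so $a_t=a_j$ is indeed the final letter of $\omega(\beta)$. The proof is therefore essentially a bookkeeping exercise on \autoref{P:Transitions}; the only real choice point is the dichotomy in (b) for $i=j+1$, and I do not anticipate any deeper obstacle.
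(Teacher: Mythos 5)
Your proposal is correct and follows essentially the same route as the paper: write $\omega(\beta)=w_1a_j$, pass to the transition $F_n(\beta_1)\stackrel{a_j}{\longrightarrow}F_n(\beta)$, and read off from \autoref{P:Transitions} that $F$ contains elements starting with every $a_i$ for $i>j$ (via Conditions~5--7) but none starting with $a_j$. Your case analysis is just a slightly more explicit version of the paper's argument.
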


\begin{proof}
Let $w=\omega(\beta)$. Since $\beta$ is not trivial, we can write $w=w_1a_j$ for some $w_1\in \mathcal L_n$. We need to show that $j=t$.

Let $\beta_1=\overline{w_1}$. Since $w_1,w\in \mathcal L_n$, it follows that $a_j$ is not forbidden after $\beta_1$, and there is an arrow labeled $a_j$ from $F_n(\beta_1)$ to $F_n(\beta)$. From \autoref{P:Transitions}, $F_n(\beta)$ contains all letters $a_{j+2},\ldots, a_{n}$ (provided $j\leq n-2$). Also, it contains either $a_{j+1}$ or $a_{j+1}a_j$ (provided $j\leq n-1$). And it contains no element starting with $a_j$. Therefore, $j$ is the biggest index such that there is no element in $F_n(\beta)$ starting with $a_j$.
\end{proof}

We can then talk about the {\it final letter} of a state $s$ in $\Gamma_n$, meaning the final letter of any word in $\mathcal L_n$ ending at $s$, or the label of any arrow in $\Gamma_n$ whose target is $s$. We can also talk about the {\it final letter} of a set $F_n(\beta)$ of minimal forbidden prefixes, as the final letter of $\omega(\beta)$.

We will now see that we can describe every set of the form $F_n(\beta)$ by a collection of data, that we will call a {\it segment configuration}.

\begin{definition}
A {\it segment configuration} for $A_n$ is a $4$-tuple $(i,j,k,S)$ where $i,j,k$ are integers, $1\leq i\leq j\leq k\leq n$, and
$$
S=\{[i_1,j_1],[i_2,j_2],\ldots,[i_t,j_t]\}
$$
is a (possibly empty) collection of ``nested" segments with integer endpoints, where
$$
 i \leq i_1< i_2 < \cdots <i_t < j \leq k\leq j_{t-1} \leq \cdots \leq j_2\leq j_1\leq n,
$$
and either $j_t=j$ or $k\leq j_t\leq j_{t-1}$.
\end{definition}

Each segment configuration determines a set of braids:

\begin{definition}\label{segmentconfig}
Let $\mathcal S_n$ be the set of segments configurations for $A_n$. We define a map
$$
   \psi_n:\: \mathcal S_n \longrightarrow \mathcal P(A_n)
$$
as follows: given $(i,j,k,S)\in \mathcal S_n$ with $S=\{[i_1,j_1],[i_2,j_2],\ldots,[i_t,j_t]\}$, we set
$$
  \begin{array}{rcl} \psi_n((i,j,k,S)) & = & \{a_{i_r}a_{i_r+1}\cdots a_{j_r}; \quad  r=1,\ldots,t\} \\
    & \cup & \{a_r;\quad i\leq r \leq n,\  r\neq i_1,\ldots,i_t,j,j+1\} \\
    & \cup & U_{j,k},
    \end{array}
$$
where
$$
    U_{j,k}=\left\{\begin{array}{ll} \emptyset & \mbox{ if }k=j=n, \\
    \{a_{j+1}a_j\} & \mbox{ if }k=j<n, \\
    \{a_{j+1}\} & \mbox{ if }k=j+1, \\
    \{a_{j+1}a_j, \; a_{j+1}a_{j+2}\cdots a_k\} & \mbox{ if }k>j+1.
    \end{array}
    \right.
$$
\end{definition}

\medskip

As said in the introduction, one of the main goals of this paper is a detailed study of the automaton $\Gamma_n$. This goal will be achieved by identifying segment configurations with sets of minimal forbidden prefixes after positive braids. More precisely, we will describe the map $\psi_n$ just defined, following the next steps:

\begin{itemize}

\item $\psi_n$ is injective, and hence different segment configurations give rise to different sets of braids (\autoref{phiinjective}).

\item Every set of the form $F_n(\beta)$ is in the image of $\psi_n$ (\autoref{P:necessary_conditions}).

\item Every set in the image of $\psi_n$ is equal to $F_n(\beta)$ for some braid $\beta$ (\autoref{P:bijection}).

\end{itemize}

This precise knowledge of $\psi_n$ is essentially equivalent to a deep understanding of the structure of the automaton, as we will see in the proof of \autoref{P:bijection}. From now, we will undertake the previous program.

\begin{lemma}\label{phiinjective}
The map $\psi_n$ is injective.
\end{lemma}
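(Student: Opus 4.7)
The plan is to show that each component of a segment configuration $(i,j,k,S)$ can be canonically recovered from the set $\psi_n((i,j,k,S))$, proceeding in three steps: first $i$ and $j$, then $k$, and finally $S$. The main bookkeeping challenge, which appears in the first step, is to absorb the degenerate cases $j=n$ (so $a_{j+1}$ does not exist and $U_{j,k}=\emptyset$) and $\psi_n((i,j,k,S))=\emptyset$ (which can only arise from $(n,n,n,\emptyset)$) into a uniform description; once this is done, the remaining steps are routine.

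For the first step, I would examine the set of \emph{initial letters} of the elements of $\psi_n((i,j,k,S))$, that is, the indices $r$ such that some element begins with $a_r$. Using the inequalities $i \leq i_1 < \cdots < i_t < j \leq k$, the three pieces of \autoref{segmentconfig} contribute, respectively, the initial letters $\{i_1,\ldots,i_t\}$ (from the segments of $S$), $[i,n]\setminus\{i_1,\ldots,i_t,j,j+1\}$ (from the singletons), and, whenever $U_{j,k}\neq\emptyset$, the single index $j+1$ (since every element of $U_{j,k}$ begins with $a_{j+1}$). Taking the union, and noting that $U_{j,k}=\emptyset$ occurs only when $k=j=n$ (in which case $j+1$ already lies outside $[i,n]$), one obtains exactly $[i,n]\setminus\{j\}$ in every case. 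Hence $i$ is the minimum of this set and $j$ is the unique index of $[i,n]$ missing from it.

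With $j$ in hand, I would read off $k$ by inspecting the elements of $\psi_n((i,j,k,S))$ whose initial letter is $a_{j+1}$. No segment of $S$ contributes here (since $i_t<j$), and no singleton starting at $j+1$ appears (since $j+1$ is excluded from the singleton collection), so these elements are exactly the members of $U_{j,k}$. The four cases in the definition of $U_{j,k}$ are then mutually distinguishable: the absence of any such element forces $k=j=n$; the presence of only the length-two decreasing word $a_{j+1}a_j$ gives $k=j<n$; the presence of only the singleton $a_{j+1}$ gives $k=j+1$; and when an increasing word of length at least two is present alongside $a_{j+1}a_j$, its final index is $k$. The two elements of $U_{j,k}$ in the last case are separated unambiguously by the direction of their indices.

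Finally, $S$ is recovered as the collection of elements of $\psi_n((i,j,k,S))$ of length at least two whose initial letter is strictly less than $j$. Each such element must be of the form $a_{i_r}\cdots a_{j_r}$ with $i_r<j\leq j_r$, since singletons are excluded by length and all elements of $U_{j,k}$ are excluded by beginning with $a_{j+1}$. The strict inequalities $i_1<\cdots<i_t$ then make the recovery of each pair $[i_r,j_r]$ unambiguous, completing the proof of injectivity.
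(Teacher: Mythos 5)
Your overall strategy --- recovering $i$, $j$, $k$ and $S$ in turn from the set $F=\psi_n((i,j,k,S))$ --- is the same as the paper's, and your Steps 2 and 3 are correct (modulo the standing observation, which you use implicitly, that every element of $F$ is a braid of the form $a_p a_{p+1}\cdots a_q$ or $a_{p+1}a_p$ and hence has a well-defined initial letter). But Step 1 contains a genuine error. Your computation of the set $T$ of initial letters as $[i,n]\setminus\{j\}$ is right in all cases, but the recovery recipe ``$i$ is the minimum of $T$'' fails exactly when $i=j$: then $T=[j+1,n]$, whose minimum is $j+1\neq i$ (and $T=\emptyset$ when $i=j=n$). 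This is not a fringe case. It occurs for the initial state $(n,n,n,\emptyset)$ and for every state $(1,1,k,\emptyset)$: there $T=[2,n]$, your recipe returns $i=2$, and then no index of $[2,n]$ is missing from $T$, so ``$j$ is the unique missing index of $[i,n]$'' is undefined and the whole procedure breaks down. Since your $j$, and hence your Steps 2 and 3, are computed relative to this $i$, the argument does not establish injectivity on any image $\psi_n((i,j,k,S))$ with $i=j$.

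The gap is repairable, because $T=[i,n]\setminus\{j\}$ does determine the pair $(i,j)$ --- but one must extract $j$ first. Since $[1,n]\setminus T=[1,i-1]\cup\{j\}$ and $j\geq i>i-1$, one has $j=\max([1,n]\setminus T)$ uniformly in all cases (including $T=\emptyset$, which forces $j=n$ and hence $i=k=n$, $S=\emptyset$); afterwards $i=\min(T\cup\{j\})$. This is in substance what the paper does: it first identifies $j$ as the largest index $p$ such that no element of $F$ starts with $a_p$, and then observes that the smallest initial letter $p$ satisfies either $p<j$, in which case $i=p$, or $p=j+1$, in which case $i=j$ (the value $p=j$ being impossible). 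With Step 1 corrected in this way, your Steps 2 and 3 go through as written and the proof is complete.
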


\begin{proof}
Suppose that $\psi_n((i,j,k,S))=\psi_n((i',j',k',S'))=F$.

Assume first that $F=\emptyset$. Then $U_{j,k}=\emptyset$, so $j=k=n$. Also, if $i<j$ there would be an element in $F$ starting with $a_i$, hence $i=j=k=n$. Now every segment $[i_r,j_r]\in S$ must satisfy $i\leq i_r<j$, which is not possible if $i=j$. Hence $(i,j,k,S)=(n,n,n,\emptyset)$. In the same way $(i',j',k',S')=(n,n,n,\emptyset)$, so the empty set has a unique preimage under $\psi_n$.

Assume now that $F\neq \emptyset$. By construction, all elements in $F$ have the form $a_pa_{p+1}\cdots a_q$ for some $p\leq q$, or $a_{p+1}a_{p}$. Hence it makes sense to talk about the {\it starting letter} of an element in $F$.

We see that $j=j'$ as it is the biggest index $p$ for which there is no element in $F$ starting with $a_p$.

Let $p$ be the smallest index for which there is an element in $F$ starting with $a_p$. Notice that $p\leq j+1$ and $p\neq j$. If $p<j$, it follows from the definition of $\psi_n$ that $i<j$ and that $i=p$. On the other hand, if $p=j+1$, it follows that $i=j$. Applying the same arguments to $i'$, we see that if $p<j$ we have $i'=p$, and if $p=j+1$ we have $i'=j$. In any case, $i=i'$.

It is also true that $k=k'$, as this value is determined by the elements in $F$ starting with $a_{j+1}$.

We finish by observing that $S=S'$, as their segments correspond to the elements in $F$, longer than one letter, that start with letters smaller than $a_j$.
\end{proof}

\begin{proposition}{\rm \cite{GG}}\label{P:necessary_conditions}
Let $F=F_n(\beta)$ be the set of minimal forbidden prefixes after some nontrivial $\beta\in A_n$. Then $F=\psi_n((i,j,k,S))$ for some segment configuration $(i,j,k,S)\in \mathcal S_n$. Moreover, $j$ is the final letter of $F$.
\end{proposition}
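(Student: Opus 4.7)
The natural approach is induction on $|\beta|$, using \autoref{P:Transitions} as the combinatorial engine that expresses $F_n(\beta_1 a_{j'})$ entirely in terms of $F_n(\beta_1)$. The base case $|\beta|=1$ is a direct computation: writing $\beta=a_{j'}$ and noting that $F_n(1)=\emptyset$, only Conditions~6 and~7 of \autoref{P:Transitions} contribute, giving
$$
  F_n(\beta)=\{a_{j'+1}a_{j'}\}\cup\{a_{j'+2},\ldots,a_n\}
$$
(suitably truncated when $j'\geq n-1$). Comparison with \autoref{segmentconfig} shows this equals $\psi_n((j',j',j',\emptyset))$, whose final letter is $a_{j'}$ as required.

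For the inductive step, let $\beta=\beta_1 a_{j'}$ with $|\beta_1|\geq 1$. By induction $F_n(\beta_1)=\psi_n((i,j,k,S))$ for some segment configuration with $S=\{[i_1,j_1],\ldots,[i_t,j_t]\}$ and final letter $a_j$. The requirement that $a_{j'}\notin F_n(\beta_1)$, combined with the list of single letters in $\psi_n((i,j,k,S))$, forces $j'$ into one of four mutually exclusive cases: (A)~$j'<i$; (B)~$j'=i_r$ for some $r$; (C)~$j'=j$; or (D)~$j'=j+1$, available only when $U_{j,k}$ does not contain $a_{j+1}$ as a single element, i.e.\ when $k\neq j+1$. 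In each case one reads off the consequences of \autoref{P:Transitions}: Condition~7 always contributes the trailing single letters $a_{j'+2},\ldots,a_n$; Conditions~5 and~6 regulate the status of $a_{j'+1}$ (it appears alone, as the pair $a_{j'+1}a_{j'}$, or at the head of a longer segment); Condition~1 transmits the old single letters strictly below $a_{j'-1}$; Conditions~2 and~3 transmit (and possibly elongate) the old segments; and Condition~4 fires precisely in Case~(D), converting $a_{j+1}a_j\in U_{j,k}$ into the new single letter $a_j$. Collecting the output of these conditions in each case yields a quadruple $(i^*,j^*,k^*,S^*)$ with $j^*=j'$, and a direct check confirms both that $\psi_n((i^*,j^*,k^*,S^*))=F_n(\beta)$ and that the ordering constraints defining a segment configuration are preserved. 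Since by construction $j^*=j'$ is the last letter of the path from the initial state to $F_n(\beta)$ in $\Gamma_n$, it coincides with the final letter of $\omega(\beta)$, proving the second claim.

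The main obstacle is Case~(B), where the old segment $a_{i_r}\cdots a_{j_r}$ starting at $a_{j'}=a_{i_r}$ is ``consumed'' by Condition~5: it is replaced by the shorter element $a_{i_r+1}\cdots a_{j_r}$, or simply by the single letter $a_{i_r+1}$ when $j_r=i_r+1$. This new element must then be fitted into the new segment configuration alongside the surviving segments of $S$, the single letters produced by Conditions~1 and~7, and the fresh $U_{j^*,k^*}$, all while respecting the nesting inequalities of \autoref{segmentconfig}. The remaining cases are more compact: Case~(A) collapses to the reset $(j',j',j',\emptyset)$; Case~(C) preserves the old segments via Condition~3 and may introduce a new segment $a_{j-1}a_j$ via Condition~2 if $a_{j-1}$ was a free single letter in $F_n(\beta_1)$; and Case~(D) is the mechanism through which a new innermost segment is appended to $S$.
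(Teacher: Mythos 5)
Your proposal is correct and is essentially the proof the paper has in mind: the paper's own ``proof'' simply cites \cite[Corollary 4.12]{GG} and then sketches, in one sentence, exactly your induction --- start from $F_n(1)=\emptyset$ and use \autoref{P:Transitions} to show that the segment-configuration form is preserved under each transition $F_n(\beta_1)\mapsto F_n(\beta_1 a_{j'})$. Your case split according to whether $j'<i$, $j'=i_r$, $j'=j$ or $j'=j+1$, and your accounting of which conditions of \autoref{P:Transitions} fire in each case, is a faithful (and more detailed) elaboration of that sketch, and it is also consistent with the explicit transition analysis the paper later carries out in the proof of \autoref{P:bijection}.
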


\begin{proof}
This result is a direct consequence of \cite[Corollary 4.12]{GG}. It can also be shown directly from \autoref{P:Transitions} starting with $F_n(1)=\emptyset$, by showing that if a set $F_n(\beta_1)$ can be defined by a segment configuration, and $a_m$ is not forbidden after $\beta_1$, then $F_n(\beta_1a_m)$ can also be defined by a segment configuration.
\end{proof}

The above result states that every set of minimal forbidden prefixes can be determined by a segment configuration. That is, $\mathcal F_n\subset \mbox{Im}(\psi_n)$, where $\mathcal F_n=\{F_n(\beta);\ \beta\in A_n\}$.  As $\psi_n$ is injective, this defines an injective map: $\varphi_n: \mathcal F_n \rightarrow \mathcal S_n$, which, as we will see, will yield the desired bijection between the sets of forbidden prefixes and their corresponding segment configurations.

\begin{figure}
\begin{center}
  \includegraphics{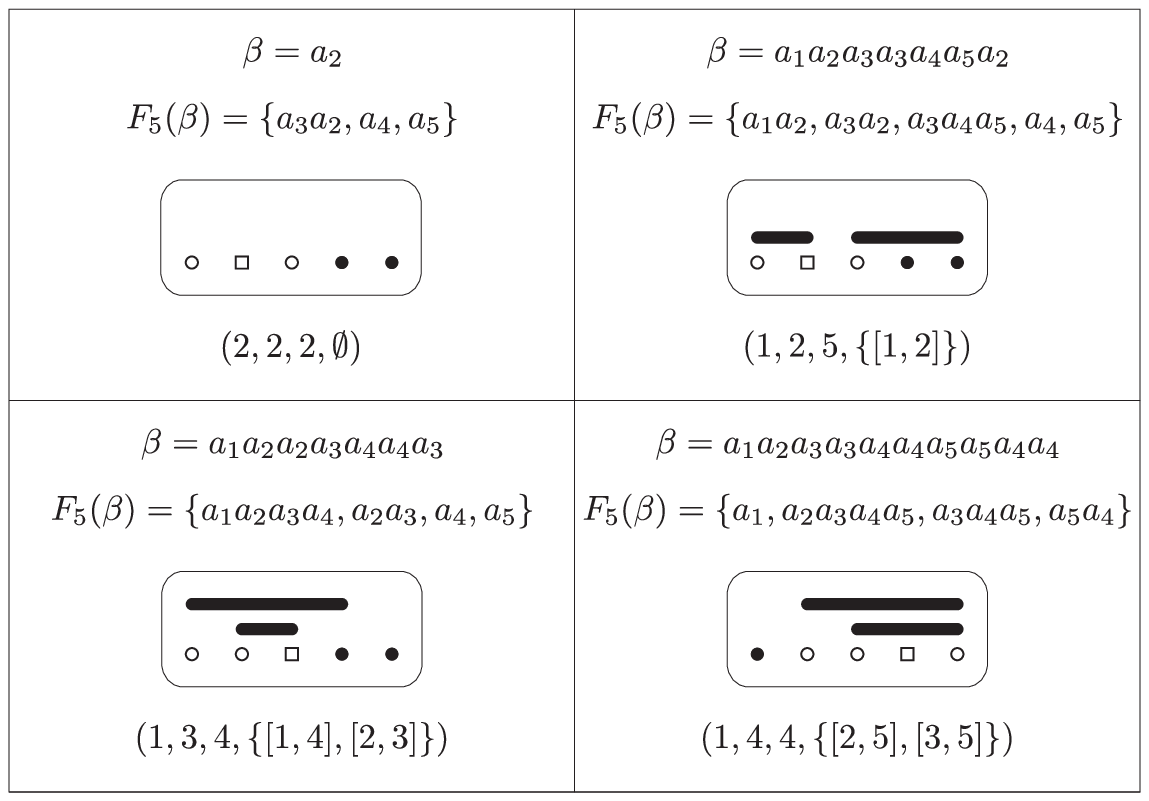}
\end{center}
\caption{Examples of braids in $A_5$, their sets of minimal forbidden prefixes, their diagrams, and their segment configurations.}
\label{F:diagrams_examples}
\end{figure}

We will represent a segment configuration using a diagram, as in \autoref{F:diagrams_examples}. Given a segment configuration $(i,j,k,S)$, and its corresponding set of braids $F=\psi_n((i,j,k,S))$, the diagram will contain the following: a little white square at position $j$; a black circle at position $r$ for each $a_r\in F$; a segment from $p$ to $q$ ($p<q$) for every $a_pa_{p+1}\cdots a_q$ in $F$; white circles at all positions where there are no black circles and no white square.

In \autoref{F:diagrams_examples} we can see several examples of diagrams for segment configurations corresponding to sets of forbidden prefixes $F_n(\beta)$. Notice that a letter $a_r$ is permitted after $\beta$ if and only if there is no black circle at position $r$.

We will make extensive use of \autoref{P:Transitions}, in order to describe the graph $\Gamma_n$. All the arguments will become simpler if we work with diagrams instead of with sets of braids. Therefore, we will translate \autoref{P:Transitions} to the language of diagrams. More precisely, we will describe transitions between diagrams in such a way that, if the source diagram corresponds to a set of forbidden prefixes, the transition corresponds to the one described in \autoref{P:Transitions}. In the following pictures, a small dot means that we do not care about the actual symbol in that position (white or black circle, or white square).

\begin{definition}\label{D:Transitions}
Let $(a,b,c,S)$ be a segment configuration, represented by a diagram $D$. Suppose that there is no black circle at position $j$ in $D$. Then we define a transition
$$
(a,b,c,S) \stackrel{a_j}{\longrightarrow} (p,j,q,S'),
$$
where $(p,j,q,S')$, represented by a diagram $D'$, is defined the following way:
\begin{enumerate}

 \item $D'$ has a square at position $j$.

 \item If $D$ has a black circle at $i<j-1$, then $D'$ has a black circle at $i$.

  \begin{center}
  \includegraphics{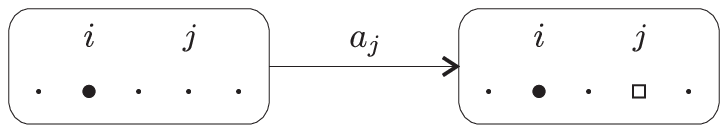}
  \end{center}

 \item If $D$ has a segment $[i,j-1]$, then $D'$ has a segment $[i,j]$.

  \begin{center}
  \includegraphics{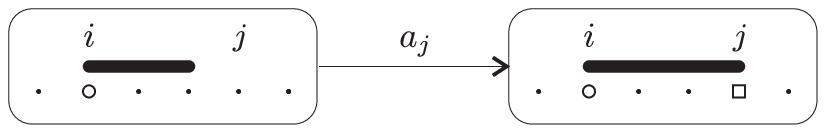}
  \end{center}

 \item If $D$ has a segment $[i,k]$ with $i<j\leq k$, then $D'$ has a segment $[i,k]$.

  \begin{center}
  \includegraphics{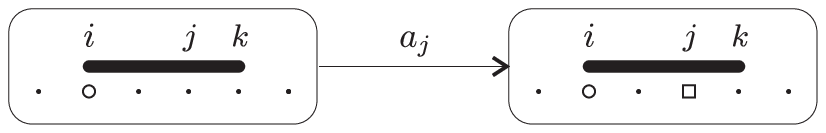}
  \end{center}

 \item If $D$ has a square at $j-1$, then $D'$ has a black circle at $j-1$.

  \begin{center}
  \includegraphics{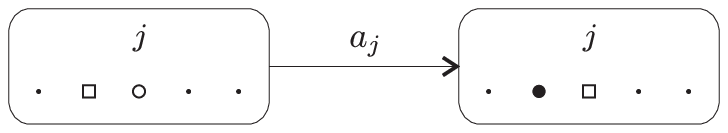}
  \end{center}

 \item If $D$ has a segment $[j,k]$, then $D'$ has either a segment $[j+1,k]$ (if $j+1<k$) or a black circle at     $k$ (if $j+1=k$).

   \begin{center}
  \includegraphics{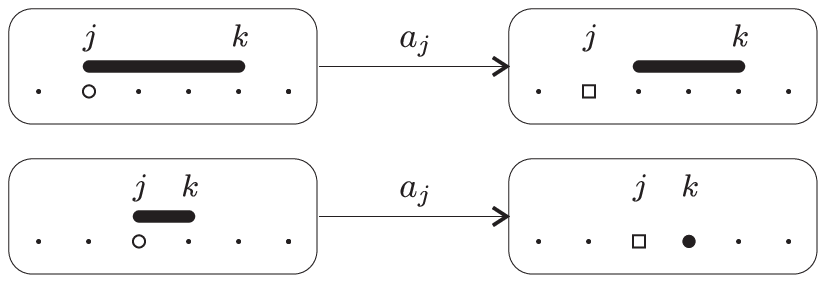}
  \end{center}

 \item $D'$ has black circles at positions $j+2, j+3,\ldots,n$.

   \begin{center}
  \includegraphics{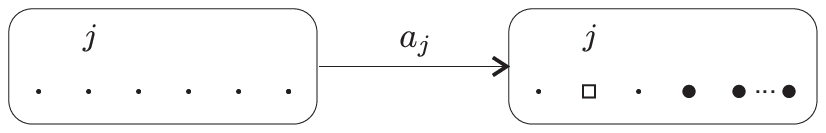}
  \end{center}

 \item $D'$ has a white circle at every position in which there is no square and no black circle.

\end{enumerate}
\end{definition}

We can now show that there are as many sets of minimal forbidden prefixes as segment configurations.

\begin{proposition}\label{P:bijection}
The map $\varphi_n:\: \mathcal F_n \rightarrow \mathcal S_n$ is bijective.
\end{proposition}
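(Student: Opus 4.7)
Injectivity of $\varphi_n$ is already in hand: \autoref{P:necessary_conditions} gives $\mathcal F_n\subseteq \mathrm{Im}(\psi_n)$, and \autoref{phiinjective} says $\psi_n$ is injective, so $\varphi_n=\psi_n^{-1}|_{\mathcal F_n}$ is well defined and injective. What remains is surjectivity: every segment configuration $\sigma\in\mathcal S_n$ must be of the form $\varphi_n(F_n(\beta))$ for some $\beta\in A_n$.

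My plan is to promote the diagram transitions of \autoref{D:Transitions}, which were defined on diagrams, to a structural operation on all of $\mathcal S_n$, and then show that every configuration can be built from the trivial one $(n,n,n,\emptyset)=\varphi_n(F_n(1))$. Concretely I proceed in three steps. \textbf{Step 1:} check that applying the rules of \autoref{D:Transitions} to an arbitrary segment configuration yields another segment configuration, i.e.\ the nestedness and endpoint inequalities $i\leq i_1<\cdots<i_t<j\leq k\leq j_{t-1}\leq\cdots\leq j_1\leq n$ are preserved in every one of the eight cases. \textbf{Step 2:} observe that \autoref{D:Transitions} is designed to mirror \autoref{P:Transitions} rule by rule, so whenever $\sigma$ is already known to equal $\varphi_n(F_n(\beta))$, the transition $\sigma\xrightarrow{a_j}\sigma'$ produces $\sigma'=\varphi_n(F_n(\beta a_j))$. \textbf{Step 3:} prove by induction that every $\sigma\in \mathcal S_n$ is reachable from $(n,n,n,\emptyset)$ via these transitions. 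Combined with Steps~1 and~2, the sequence of letters along a reaching path gives a braid $\beta$ with $F_n(\beta)=\psi_n(\sigma)$, so $\sigma\in \mathrm{Im}(\varphi_n)$.

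The substantive part is Step~3, which I would handle by well-founded induction on a complexity measure such as the lexicographic pair $(n-i,\ |S|)$ with $k-j$ as a tie-breaker. For a non-initial $\sigma=(i,j,k,S)$ I exhibit a predecessor $\sigma'$ and a letter $a_m$ with $\sigma'\xrightarrow{a_m}\sigma$ by essentially reversing the rule of \autoref{D:Transitions} that most recently acted: if $S$ contains a segment $[i_r,j]$ ending at the square, reverse rule~(3) by shrinking it to $[i_r,j-1]$; if $U_{j,k}$ contains a non-trivial segment $a_{j+1}\cdots a_k$ with $k>j+1$, reverse rule~(6); the forced black circles at positions $\geq j+2$ coming from rule~(7) and any black circle at $j-1$ produced by rule~(5) pin down where the square of $\sigma'$ must have sat. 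In each case one checks that the resulting $\sigma'$ satisfies the segment-configuration inequalities and is strictly smaller in the chosen measure.

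The main obstacle will be this case analysis in Step~3: different rules of \autoref{D:Transitions} can create the same feature in the target, so the predecessor is not unique. For instance, a segment $[i,j]$ in $\sigma$ with $i<j-1$ might originate from $[i,j-1]$ in $\sigma'$ via rule~(3) or from $[i,j]$ already in $\sigma'$ via rule~(4); and a black circle at position $p>j$ could be explained either by rule~(7) from the current transition or by having been present since earlier. The art is to pick the predecessor canonically so that the recursion always strictly decreases the chosen measure and eventually terminates at $(n,n,n,\emptyset)$. Once this canonical choice is fixed and verified, Step~2 converts the resulting path in $\mathcal S_n$ into an explicit braid word realising $\sigma$, completing the proof of surjectivity and hence of the bijection.
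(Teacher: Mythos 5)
Your Steps 1 and 2 are sound and coincide with the paper's setup: one forms the graph $\widetilde\Gamma_n$ on all of $\mathcal S_n$ using the transitions of \autoref{D:Transitions}, notes that these mirror \autoref{P:Transitions} so that $\Gamma_n$ sits inside $\widetilde\Gamma_n$ as the set of configurations accessible from $(n,n,n,\emptyset)$, and reduces surjectivity to showing that every configuration is accessible. The gap is in Step~3, which is where all the substance lies and which you leave as a plan. Worse, the specific measure you propose provably fails: take the configuration $(1,1,k,\emptyset)$ with $2\leq k\leq n$. A complete analysis of the transitions shows that its only predecessors in $\widetilde\Gamma_n$ are configurations $(1,j',k',S')$ with $[1,k]$-type segments in $S'$, i.e.\ with $i'=1$ and $S'\neq\emptyset$ (arrows labelled $a_1$ only arrive at states of the form $(1,1,m,\emptyset)$, and for $m\geq 2$ they originate only at states whose diagram contains a segment starting at position $1$). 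Every such predecessor has lexicographic measure $(n-1,\,|S'|)$ with $|S'|\geq 1$, which is strictly \emph{larger} than the measure $(n-1,0)$ of the target, so your backward recursion does not decrease and never reaches $(n,n,n,\emptyset)$. Choosing a measure that works amounts to already knowing the global structure of the graph; and even granting a measure, you would still need to prove that every non-initial configuration has at least one incoming arrow, which is not automatic and requires the same exhaustive case analysis you defer.

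The paper organizes this differently: it proceeds by induction on $n$ (not on a complexity measure within $\mathcal S_n$), with a three-part induction hypothesis --- every vertex of $\widetilde\Gamma_n$ is accessible from the initial state; the subgraph $\widetilde\Gamma^*_n$ of configurations with $i=1$ is strongly connected; and every $(1,j,k,S)$ with $j=k$ or $S\neq\emptyset$ is accessible from $(1,1,1,\emptyset)$ by a path avoiding $a_1$. It then partitions $\mathcal S_n$ into five explicit families (those with $i>1$, which form a shifted copy of $\Gamma_{n-1}$; the $(1,1,k,\emptyset)$; those with a black circle at position $1$, forming $sh_\bullet(\Gamma^*_{n-1})$; the chains $(1,j,k,\{[1,j]\})$; and those containing a segment $[1,j_1]$ with $j_1\geq k$), determines \emph{all} arrows between these families, and threads an accessibility argument through them. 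If you want to rescue your approach you must carry out an equivalent complete determination of the arrows of $\widetilde\Gamma_n$; at that point the forward induction on $n$ is the natural way to conclude, and it has the added benefit of producing the structural description of $\Gamma_n$ that the rest of the paper relies on.
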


\begin{proof}
We have already seen that $\varphi_n$ is a well defined injective map. It just remains to show that $\varphi_n$ is surjective.

Recall that $F_n(1)=\emptyset$ corresponds to the initial state of $\Gamma_n$. \autoref{P:Transitions} explains how to connect states of $\Gamma_n$ by transitions. For every state $F\in \mathcal F_n$, and every permitted letter $a_r$  (that is, $a_r\notin F$), \autoref{P:Transitions} describes the target state $F'$, so we have the transition $F \stackrel{a_r}{\longrightarrow} F'$. If $F=F_n(\beta)$, we have $F'=F_n(\beta a_r)$. All elements of $\mathcal F_n$ are obtained starting at $\emptyset$, and applying a sequence of transitions. We say that all states in $\Gamma_n$ are {\it accessible} from the initial state $\emptyset$.

On the other hand, in \autoref{D:Transitions} we explained a way to connect segment configurations. This defines a directed graph $\widetilde \Gamma_n$, whose vertices are the elements in $\mathcal S_n$, and whose arrows are given by letters $a_r$.


Suppose that we have a transition $(i,j,k,S)\stackrel{a_r}{\longrightarrow} (i',j',k',S')$ in $\widetilde \Gamma_n$. If $(i,j,k,S)=\varphi_n(F_n(\beta))$ for some braid $\beta$, we know, by construction, that $(i',j',k',S')=\varphi_n(F_n(\beta a_r))$. It follows that $\Gamma_n$ can be seen (via $\varphi_n$) as a subgraph of $\widetilde \Gamma_n$. More precisely, $\Gamma_n$ can be seen as the subgraph of $\widetilde \Gamma_n$ formed by the vertices which are accessible from the initial state $\varphi_n(\emptyset) = (n,n,n,\emptyset)$, whose diagram is as follows:
$$
\vcenter{\hbox{\includegraphics{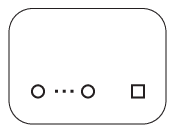}}}
$$

We will then compute the whole graph $\widetilde \Gamma_n$, and we will show that all its vertices are accessible from the initial state, so $\widetilde \Gamma_n$ actually coincides with $\Gamma_n$. This will finish the proof.

Suppose that $n=1$. The only possible segment configuration for $A_1$ is $(1,1,1,\emptyset)$. Then $\widetilde \Gamma_1$ has a single vertex, which by \autoref{P:Transitions} has an arrow labeled $a_1$ starting at ending at it.  Since this vertex is the initial state of $\Gamma_1$, it follows that $\Gamma_1=\widetilde\Gamma_1$, so the result holds for $n=1$.

For every $n\geq 1$, let $\widetilde \Gamma^*_n$  be the subgraph of $\widetilde \Gamma_n$ whose vertices have the form $(1,j,k,S)$. Then we formulate the following

\textbf{Induction hypothesis:}

\begin{enumerate}

\item Every vertex in $\widetilde \Gamma_n$ is accessible from the initial state.

\item Every vertex in $\widetilde \Gamma^*_n$ is accessible from any other vertex in $\widetilde \Gamma^*_n$.

\item Every vertex $(1,j,k,S)\in\widetilde \Gamma^*_n$ with either $j=k$ or $S\neq \emptyset$ is accessible from the vertex $(1,1,1,\emptyset)$ by a path which does not include the label $a_1$.

\end{enumerate}

Observe that these assumptions are clearly true for $n=1$. Let us then suppose that $n>1$, and that the induction hypothesis holds for smaller values of $n$.

In order to describe the whole graph $\widetilde \Gamma_n$, let us distinguish distinct subsets of $\mathcal S_n$:

\begin{itemize}

\item Configurations $(i,j,k,S)$, $i>1$.

\item Configurations $(1,1,k,\emptyset)$.

\item Configurations $(1,j,k,S)$, $1<j$ and no segment $[1,j_1]$ in $S$.

\item Configurations $(1,j,k,\{[1,j]\})$, where $1<j<k$.

\item Configurations $(1,j,k,S)$ with $[1,j_1]\in S$ and $j_1\geq k$.

\end{itemize}

Observe that all these sets of configurations are disjoint, and hence by \autoref{segmentconfig} form a partition of $\mathcal{S}_n$. So let us start our case-by-case analysis.

\underline{Configurations $(i,j,k,S)$ with $i>1$}.

These correspond to the diagrams with a white circle at position 1.

Let us consider the shifting homomorphism $sh: A_{n-1}\rightarrow A_n$ and the shifting map $sh: \mathcal L_{n-1}\rightarrow \mathcal L_n$ determined by $sh(a_t)=a_{t+1}$ for all $t$. We can also define a shifting map $sh: \mathcal S_{n-1}\rightarrow \mathcal S_n$ which increases by 1 all indices in a segment configuration. In terms of diagrams, the shifting just adds a white circle to the left of the diagram. In this picture, we have $sh(C')=C$:
$$
\vcenter{\hbox{\includegraphics{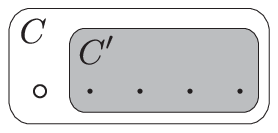}}}
$$

Notice that for every arrow $C_1' \stackrel{a_r}{\longrightarrow} C_2'$ in $\widetilde\Gamma_{n-1}$, there is an arrow $sh(C_1') \stackrel{a_{r+1}}{\longrightarrow} sh(C_2')$ in $\widetilde \Gamma_n$. This covers all arrows starting at vertices of the form $sh(C')$, of index greater than 1. Therefore, all the states $(i,j,k,S)$ with $i>1$ form a complete copy of $\widetilde \Gamma_{n-1} =\Gamma_{n-1}$, but with the labels of the arrows shifted (their indices are increased by one). We will denote this copy $sh(\Gamma_{n-1})$.

Each diagram corresponding to a state of $sh(\Gamma_{n-1})$ has an isolated white circle at position 1. This means that it accepts an outgoing arrow labeled $a_1$. Its target is $(1,1,1,\emptyset)$.  So we have a complete copy of $\Gamma_{n-1}$, with inner arrows labeled $\{a_2,\ldots,a_n\}$, all of whose states have an arrow labeled $a_1$ and pointing to $(1,1,1,\emptyset)$.  We have then described all arrows starting at elements of $sh(\Gamma_{n-1})$.

$$
\vcenter{\hbox{\includegraphics{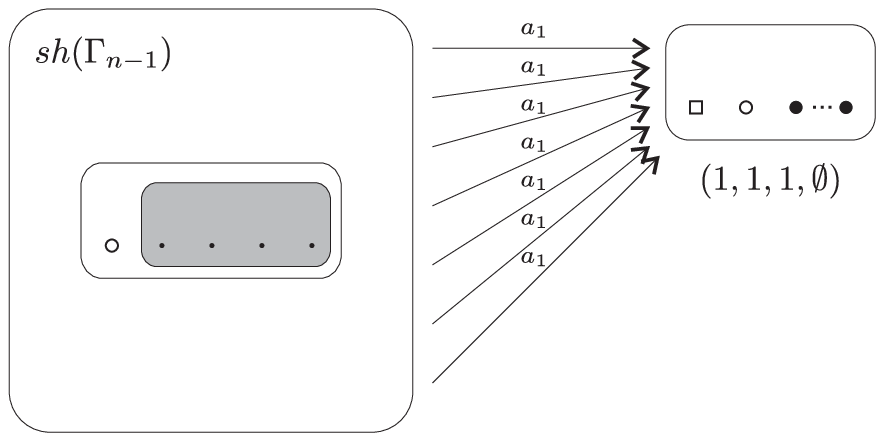}}}
$$

%
%
%

\underline{Configurations $(1,1,k,\emptyset)$}.

These correspond to the diagrams with a square at position 1. We will denote $T_{1,n}=\{(1,1,k,\emptyset);\ k=1,\ldots,n\}\subset \mathcal S_n$ the subset containing these configurations.

The only permitted arrows starting at such states are $a_1$ and $a_2$ (in the latter case, only if $k\neq 2$).

We see that an arrow labeled $a_1$ starting at any $(1,1,k,\emptyset)$ ends at $(1,1,1,\emptyset)$. On the other hand, an arrow labeled $a_2$ starting at $(1,1,k,\emptyset)$ (for $k\neq 2$) ends at $(1,2,k,\emptyset)$. This ends the description of the arrows starting at elements in $T_{1,n}$.

$$
\vcenter{\hbox{\includegraphics{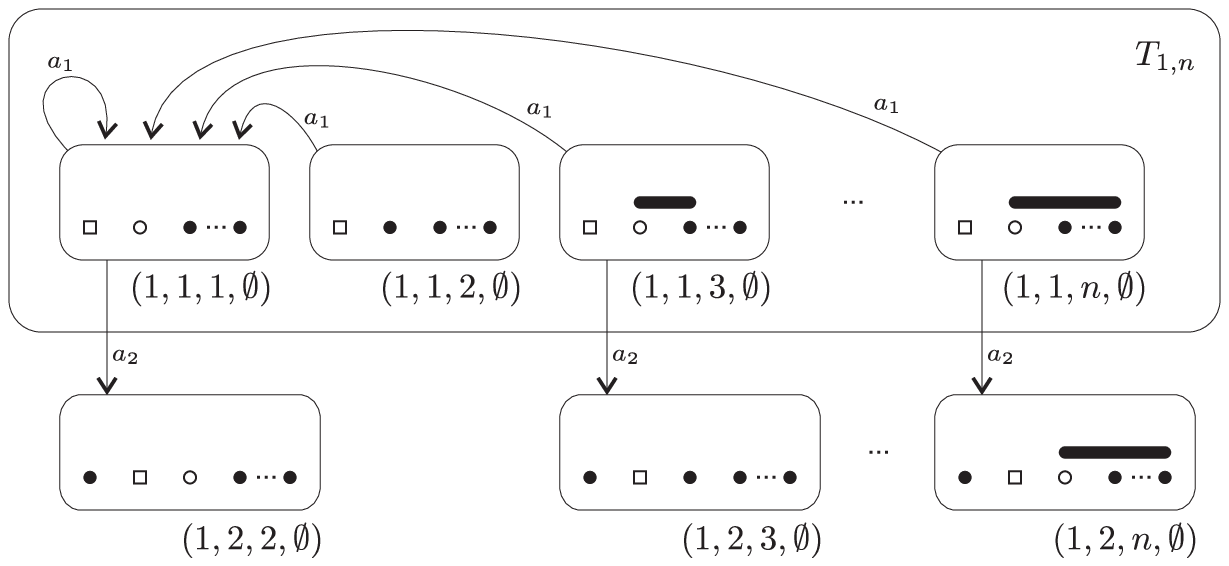}}}
$$

\underline{Configurations $(1,j,k,S)$ with $1<j$ and no segment $[1,j_1]$ in $S$}.

These correspond to the diagrams with a black circle at position 1.

Let $\mathcal S_{n-1}^*$ be the segment configurations in $\mathcal S_{n-1}$ not having an isolated white circle at position 1. That is, those of the form $(1,j',k',S')$. Recall that $\widetilde \Gamma^*_{n-1}$ is the subgraph of $\widetilde \Gamma_{n-1}$ whose vertices belong to $\mathcal S_{n-1}^*$. We know by induction that we can identify $\widetilde \Gamma_{n-1}$ and $\Gamma_{n-1}$, so we will just denote $\Gamma^*_{n-1}=\widetilde \Gamma^*_{n-1}$.

Given $C'=(1,j',k',S')\in \mathcal S_{n-1}^*$, we have $sh(C')=(2,j,k,S)$, and we define $sh_{\bullet}(C')=(1,j,k,S)$. Every configuration whose diagram has a black circle at position 1 can be obtained in this way. Hence, the subgraph of $\widetilde \Gamma_{n-1}$ containing these vertices will be denoted $sh_{\bullet}(\Gamma^*_{n-1})$.

The graph $sh_{\bullet}(\Gamma^*_{n-1})$ is quite similar to $\Gamma^*_{n-1}$: It has the same number of vertices and, for every arrow $C_1' \stackrel{a_r}{\longrightarrow} C_2'$ in $\Gamma^*_{n-1}$, with $r\geq 2$, there is an arrow $sh_{\bullet}(C_1') \stackrel{a_{r+1}}{\longrightarrow} sh_{\bullet}(C_2')$ in $\widetilde \Gamma_n$, as the arrow $a_{r+1}$ does not affect the black circle at position 1. The difference comes from the arrows labeled $a_2$ starting at states of $sh_{\bullet}(\Gamma^*_{n-1})$.

Suppose that a state from $sh_{\bullet}(\Gamma^*_{n-1})$ admits an outgoing arrow labeled $a_2$. In this case, in the diagram corresponding to this state, there is either a square at position 2 or a segment starting at position 2 and ending at some position $k\in \{3,\ldots,n\}$. And we know that there is a black circle at position 1. Therefore, at the target of the arrow $a_2$ we have the segment configuration $(1,2,k,\{[1,2]\})$, for some $k=2,\ldots,n$. Hence, there are exactly $n-1$ states which receive the outgoing arrows $a_2$ from $sh_{\bullet}(\Gamma^*_{n-1})$.

Since every state in $sh_{\bullet}(\Gamma^*_{n-1})$ has a black circle at position 1, the letter $a_1$ is forbidden after any such state, so there is no arrow labeled $a_1$ starting at any such state.  We have then described all arrows starting at elements in $sh_{\bullet}(\Gamma^*_{n-1})$.
$$
\vcenter{\hbox{\includegraphics{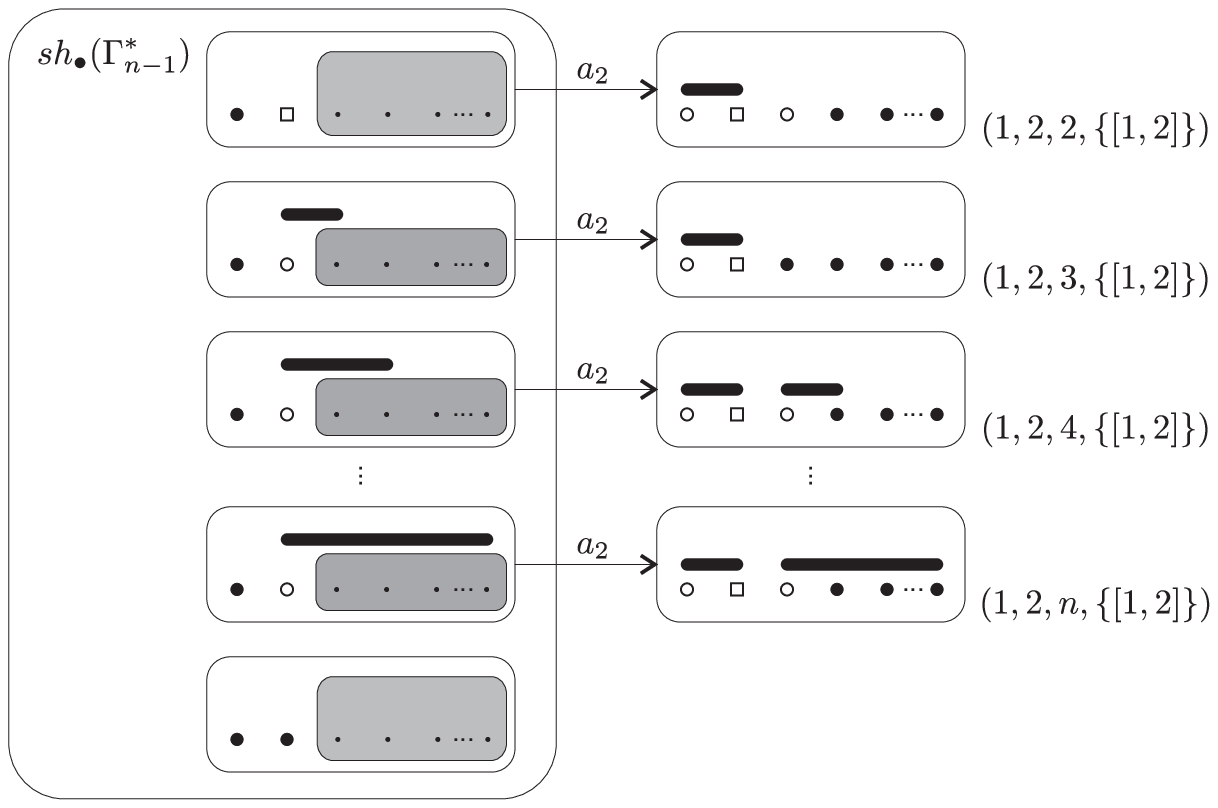}}}
$$

\underline{The states $(1,j,k,\{[1,j]\})$, where $1<j<k$}.

These correspond to diagrams with a segment $[1,j]$ ($j$ is the position of the square), and either a black circle at $j+1$ or a segment $[j+1,k]$. For every $j=2,\ldots,n-1$, we will denote $T_{j,n}=\{(1,j,k,\{[1,j]\});\ k=j+1,\ldots,n\}\subset \mathcal S_n$ the subset containing such configurations.

The arrows starting at such a state are labeled $a_1$, $a_j$ and $a_{j+1}$, the latter case only if $k>j+1$. In the latter case, we have the arrow:
$$
   (1,j,k,\{[1,j]\}) \stackrel{a_{j+1}}{\longrightarrow} (1,j+1,k,\{[1,j+1]\}).
$$
So, for every $k>1$ we have a chain:
$$
  (1,2,k,\{[1,2]\}) \stackrel{a_{3}}{\longrightarrow} (1,3,k,\{[1,3]\}) \stackrel{a_{4}}{\longrightarrow} \cdots  \stackrel{a_{k-1}}{\longrightarrow} (1,k-1,k,\{[1,k-1]\}).
$$

Now, from every state $(1,j,k,\{[1,j]\})$ with $1<j<k$, there is an arrow labeled $a_1$ going to $(1,1,j,\emptyset)$.

Finally, from every state $(1,j,k,\{[1,j]\})$ with $1<j<k$, there is an arrow labeled $a_j$ going to $(1,j,j,\{[1,j],[j-1,j]\})$.

$$
\vcenter{\hbox{\includegraphics{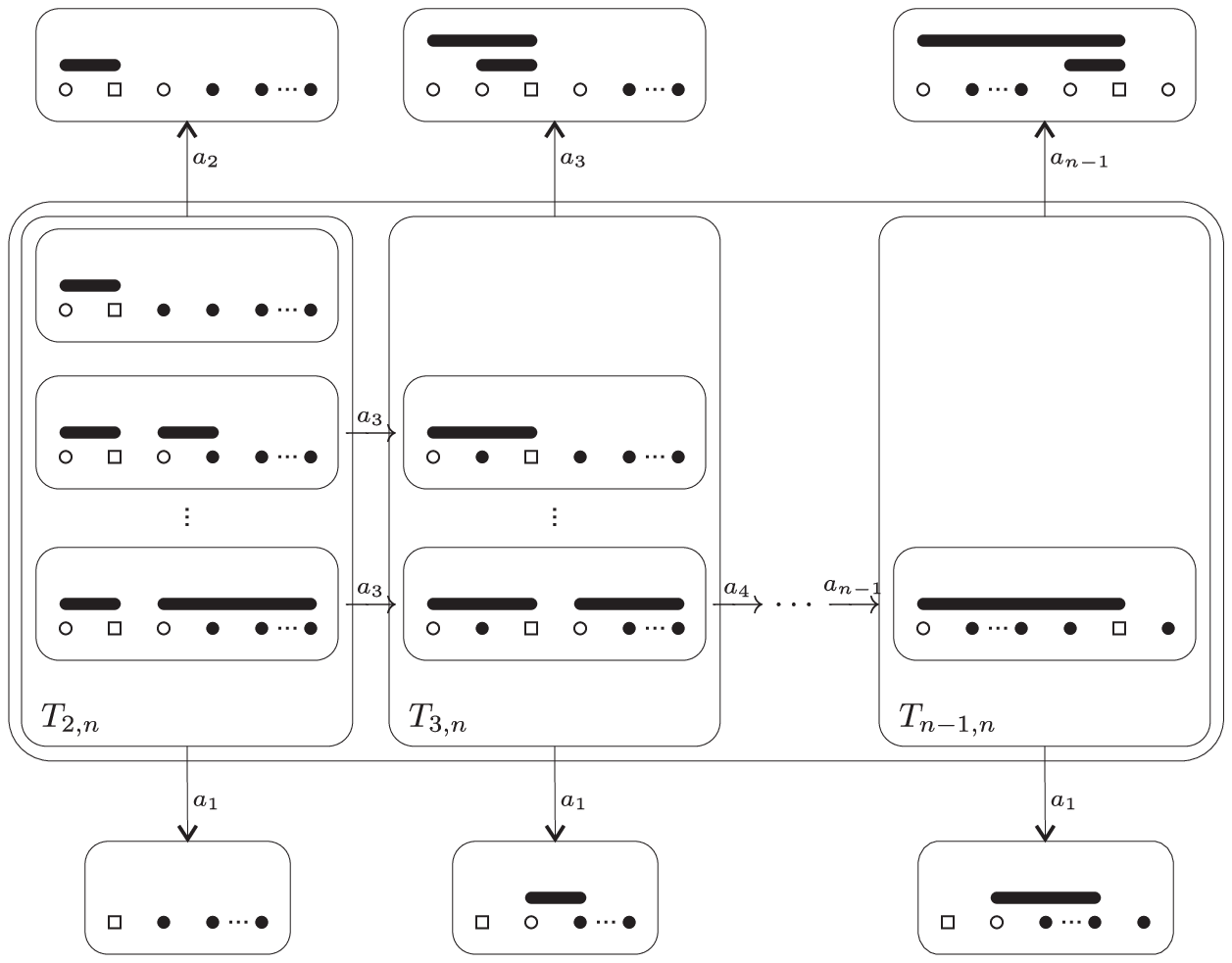}}}
$$

\underline{The states $(1,j,k,S)$ with $[1,j_1]\in S$ and $j_1\geq k$}.

These are the remaining configurations in $\mathcal S_n$. They have a segment from 1 to $j_1$. To the right of the segment they have only black circles, unless $j=j_1<n$: in that case they have a white circle at position $j_1+1$.

The permitted labels for arrows starting at such configurations are $a_1$ and maybe some letters in $\{a_2,\ldots,a_{j+1}\}$.

The target of an arrow labeled $a_1$ is $(1,1,j_1,\emptyset)$, in any case.

Fix $j_1\in \{2,\ldots,n\}$. If we remove the segment $[1,j_1]$ from the configuration $C_1=(1,j,k,S)$, we obtain the configuration $C_1^*=(2,j,k,S^*)$, which only involves the numbers $\{2,\ldots,j_1\}$. We can consider this configuration as a state of $sh(\Gamma^*_{j_1-1})$. Every arrow $C_1^* \stackrel{a_r}{\longrightarrow} C_2^*$ in $sh(\Gamma^*_{j_1-1})$, with $r\in \{2,\ldots,j_1\}$, corresponds to an arrow $C_1 \stackrel{a_r}{\longrightarrow} C_2$, where $C_i$ is obtained from $C_i^*$ by adding the segment $[1,j_1]$. This means that the elements containing the segment $[1,j_1]$ form a complete copy of the graph $sh(\Gamma^*_{j_1-1})$. We will denote this subgraph $\overline{sh(\Gamma^*_{j_1-1})}$.

There is still another possible arrow starting at such kind of configuration, but it can only happen if $j=k=j_1<n$. In this case, $a_{j_1+1}=a_{j+1}$ is a permitted arrow. Also, in this case, all segments in $S$ must end at $j$, so we have $S=\{[1,j],[i_2,j],\ldots,[i_t,j]\}$. Then, denoting $S'=\{[1,j+1],[i_2,j+1],\ldots,[i_t,j+1]\}$, the target of the arrow $a_{j+1}$ starting at $(1,j,k,S)=(1,j,j,S)$ is the configuration $(1,j+1,j+1,S')$.

$$
\vcenter{\hbox{\includegraphics[scale=0.9]{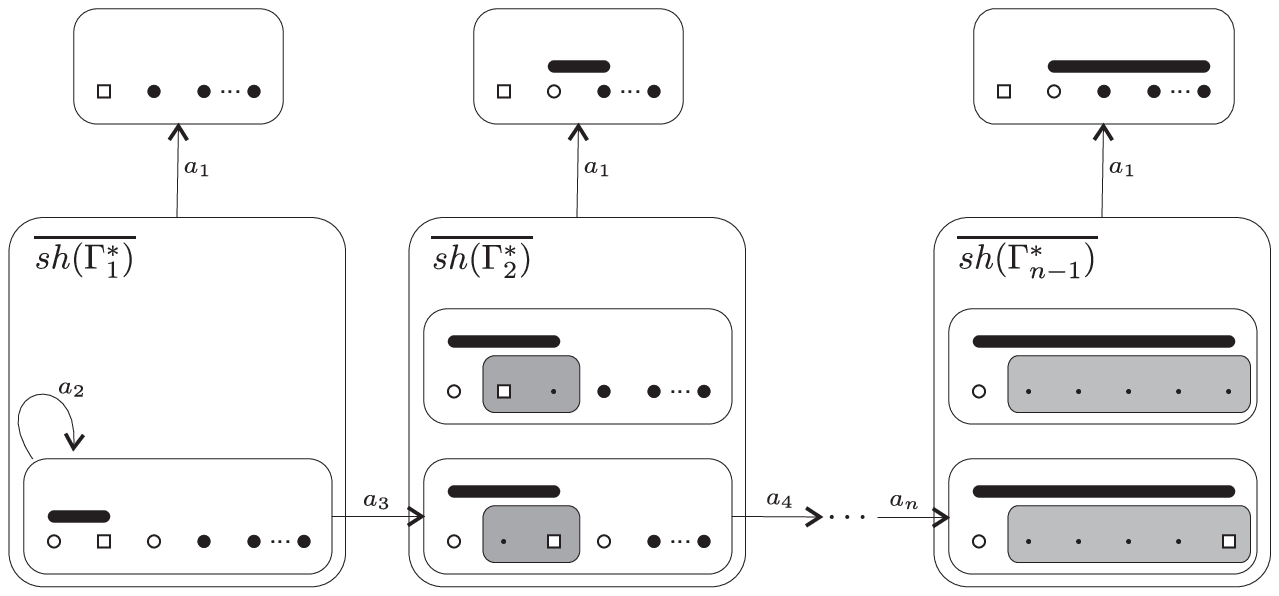}}}
$$

There are no other possible arrows starting at these configurations, so we have described all possible transitions between the distinct configurations in $\mathcal S_n$.

Joining all the pieces, we have a complete picture of the graph $\widetilde \Gamma_n$, in \autoref{F:Gamma_n}.

\begin{sidewaysfigure}
\includegraphics[scale=0.5]{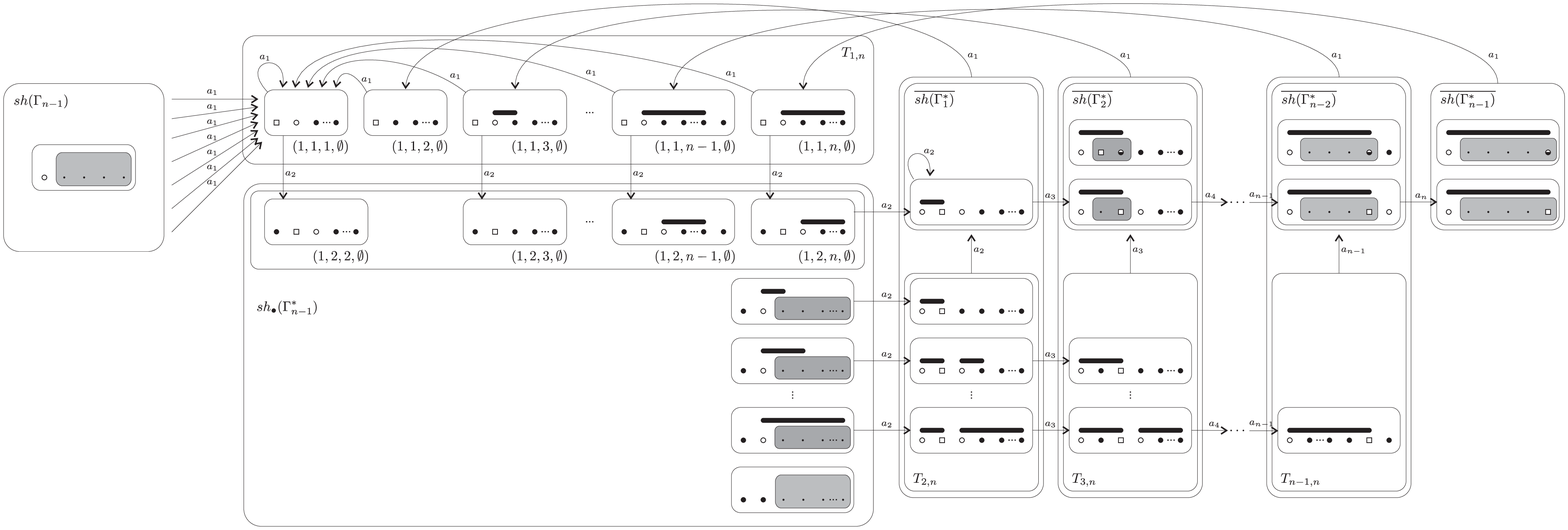}
\caption{Structure of the automaton $\Gamma_n$}
\label{F:Gamma_n}
\end{sidewaysfigure}

Now let us see that $\widetilde \Gamma_n= \Gamma_n$, and that the induction hypothesis stated at the the beginning of this proof is satisfied. We first need to see that every vertex of $\widetilde \Gamma_n$ is accessible from the initial vertex $(n,n,n,\emptyset)$.

First, we see that $(n,n,n,\emptyset)\in sh(\Gamma_{n-1})$. Moreover, it is the shift of $(n-1,n-1,n-1,\emptyset)$, which is the initial state of $\Gamma_{n-1}$. By induction hypothesis we know that every vertex of $\Gamma_{n-1}$ is accessible from the initial state, hence every vertex of $sh(\Gamma_{n-1})$ is accessible from $(n,n,n,\emptyset)$.

Recall that every vertex in $sh(\Gamma_{n-1})$ has an arrow pointing to $(1,1,1,\emptyset)\in \mathcal S_n$. Hence, this vertex is accessible. Now we can apply the transition $a_2$, to obtain $(1,2,2,\emptyset)$, which is a vertex in $sh_{\bullet}(\Gamma^*_{n-1})$.

By induction hypothesis, all vertices in $\Gamma^*_{n-1}$ of the form $(1,j,k,S)$, with either $j=k$ or $S\neq \emptyset$, are accessible from $(1,1,1,\emptyset)$ by a path not involving the label $a_1$. Therefore, as the arrows in $sh_{\bullet}(\Gamma^*_{n-1})$ with labels in $\{a_3,\ldots,a_n\}$ coincide (with shifted indices) with the arrows in $\Gamma^*_{n-1}$ with labels in $\{a_2,\ldots,a_{n-1}\}$, we obtain that every vertex in $sh_{\bullet}(\Gamma^*_{n-1})$ not having the form $(1,j,k,\emptyset)$ ($j<k$), is accessible from $(1,2,2,\emptyset)=sh_{\bullet}((1,1,1,\emptyset))$.

Now we can access all vertices in $T_{j,n}$ for $j=2,\ldots,n-1$, starting at suitable (accessible) vertices in $sh_{\bullet}(\Gamma^*_{n-1})$, for instance $(1,j,j,\{[2,j]\})$ for $j=2,\ldots, n$, and applying the transitions $a_2, a_3,\ldots, a_{n-1}$.

Given $j\in \{2,\ldots,n-1\}$, we can apply the transition $a_j$ to any vertex in $T_{j,n}$, to obtain the vertex $(1,j,j,\{[1,j]\})$.  This is, hence, an accessible vertex in $\overline{sh(\Gamma^*_{j-1}})$. Since $\overline{sh(\Gamma^*_{j-1})}$ is a complete copy of $\Gamma^*_{j-1}$, with the indices shifted, it follows that all vertices in $\overline{sh(\Gamma^*_{j-1})}$ are accessible from any other. This holds for $j=2,\ldots,n-1$.

Now we can access $\overline{sh(\Gamma^*_{n-1})}$ from $\overline{sh(\Gamma^*_{n-2})}$, since there is an arrow labeled $a_n$, for instance, from $(1,n-1,n-1,\{[1,n-1]\})$ to $(1,n,n,\{[1,n]\})$. It follows that all vertices in $\overline{sh(\Gamma^*_{n-1})}$ are also accessible.

Notice that we have already accessed, from $(1,1,1,\emptyset)$, all elements in $\widetilde \Gamma_n$ of the form $(1,j,k,S)$ except those of the form $(1,j,k,\emptyset)$ with $j<k$, by a path which does not involve the label $a_1$.

Finally, from any vertex in $\overline{sh(\Gamma^*_{k-1})}$ (for $k=2,\ldots,n$), there is a transition $a_1$ to the vertex $(1,1,k,\emptyset)$. And from this one, we have a sequence of transitions $a_2,a_3,\dots,a_{k-1}$, which lead successively to $(1,2,k,\emptyset)$, $(1,3,k,\emptyset),\ldots, (1,k-1,k,\emptyset)$. This covers all the remaining vertices of $\widetilde \Gamma_n$.

Therefore, all vertices of $\widetilde \Gamma_n$ are accessible from the initial vertex $(n,n,n,\emptyset)$. This implies that $\widetilde \Gamma_n = \Gamma_n$. Moreover, it follows by construction that all elements in $\Gamma_n$ of the form $(1,j,k,S)$ except those of the form $(1,j,k,\emptyset)$ with $j<k$, are accessible from $(1,1,1,\emptyset)$ by a path which does not involve the label $a_1$.  Finally, we can easily check that from every vertex in $\Gamma^*_n$ we can access $(1,1,1,\emptyset)$ (following the transitions explained in the paragraphs above, and noticing that $(1,1,1,\emptyset)$ can be accessed from $(1,1,k,\emptyset)$ by a transition $a_1$). Therefore, we can go from any vertex in $\Gamma^*_n$ to any other, passing through $(1,1,1,\emptyset)$.

This shows the induction step, and finishes the proof. So we have described the whole automaton $\Gamma_n$.
\end{proof}

\section{Size of the automaton}\label{S:size}

One nice consequence of \autoref{P:bijection} is that we can count the number of states of the automaton $\Gamma_n$, as they are in bijection with the number of segments configurations for $A_n$.

Instead of counting directly the number of segment configurations, we will derive a recurrence relation between the number of segment configurations for $A_n$ and the number of segment configurations for $A_m$, with $m<n$. From that recurrence relation we will obtain the final formula for the number of states of $\Gamma_n$, which surprisingly depends on the Fibonacci numbers.

\begin{proposition}\label{P:recurrence_relation}
For $n\geq 0$, let $s_n$ be the number of states in $\Gamma_n$, which coincides with the number of segment configurations for $A_n$ (we are considering $\Gamma_0=\emptyset$). We have $s_0=0$, $s_1=1$, and the following recurrence relation:
$$
    s_n=3\:s_{n-1}-s_{n-2}+{n\choose 2}+1.
$$
\end{proposition}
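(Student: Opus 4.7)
The plan is to exploit the explicit partition of $\mathcal{S}_n$ into five disjoint pieces established in the proof of \autoref{P:bijection}, count each piece, and add up. Since \autoref{P:bijection} identifies $\mathcal{F}_n$ with $\mathcal{S}_n$, this directly counts $s_n$. I will also introduce the auxiliary quantity $s^*_n := |\mathcal{S}^*_n|$, where $\mathcal{S}^*_n$ denotes the configurations with first index equal to $1$. Partitioning $\mathcal{S}_n$ by whether $i = 1$ or $i > 1$, and noting that the configurations with $i > 1$ form a copy of $\mathcal{S}_{n-1}$ via the shift map, yields
$$ s^*_n = s_n - s_{n-1}. $$
This little identity will be the key bookkeeping tool.

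First I would recall the five disjoint subsets of $\mathcal{S}_n$ described in the proof of \autoref{P:bijection}: (i) configurations with $i > 1$; (ii) the set $T_{1,n} = \{(1,1,k,\emptyset)\,:\, 1\le k \le n\}$; (iii) configurations with $i=1$, $j>1$ and no segment of the form $[1,j_1]$ in $S$; (iv) the sets $T_{j,n} = \{(1,j,k,\{[1,j]\})\,:\, j<k\le n\}$ for $j = 2,\dots,n-1$; and (v) configurations containing a segment $[1,j_1]$ with $j_1 \geq k$. Next I would count each piece:
\begin{itemize}
\item Case (i) is in bijection with $\mathcal{S}_{n-1}$ via the shift, giving $s_{n-1}$.
\item Case (ii) has exactly $n$ elements.
\item Case (iii) is in bijection with $\mathcal{S}^*_{n-1}$ via the map $sh_\bullet$, giving $s^*_{n-1} = s_{n-1} - s_{n-2}$.
\item Case (iv) contributes $\sum_{j=2}^{n-1}(n-j) = \binom{n-1}{2}$.
\item Case (v), stratified by the value of $j_1 \in \{2,\dots,n\}$, places the configurations containing $[1,j_1]$ in bijection with $\mathcal{S}^*_{j_1 - 1}$ (remove the segment $[1,j_1]$ and shift). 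This yields $\sum_{j_1=2}^{n} s^*_{j_1-1} = \sum_{m=1}^{n-1}(s_m - s_{m-1})$, which telescopes to $s_{n-1} - s_0 = s_{n-1}$.
\end{itemize}

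Adding the five contributions gives
$$ s_n \;=\; s_{n-1} \;+\; n \;+\; (s_{n-1} - s_{n-2}) \;+\; \binom{n-1}{2} \;+\; s_{n-1}, $$
and then the arithmetic identity
$$ n + \binom{n-1}{2} \;=\; \frac{2n + (n-1)(n-2)}{2} \;=\; \frac{n(n-1)}{2} + 1 \;=\; \binom{n}{2} + 1 $$
immediately produces the claimed recurrence $s_n = 3s_{n-1} - s_{n-2} + \binom{n}{2} + 1$. The base cases $s_0 = 0$ and $s_1 = 1$ are verified directly from the definition: $\Gamma_0 = \emptyset$ and $\mathcal{S}_1 = \{(1,1,1,\emptyset)\}$.

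The only delicate step is the counting in case (v): one must check that the correspondence really is a bijection between configurations of $\mathcal{S}_n$ containing the segment $[1,j_1]$ and $\mathcal{S}^*_{j_1-1}$, i.e., that after deleting $[1,j_1]$ the remaining data form a valid segment configuration on the indices $\{2,\dots,j_1\}$ with first index equal to $2$, and conversely that any such configuration can be augmented by the segment $[1,j_1]$. This amounts to unpacking the nesting inequalities in the definition of segment configuration and checking the endpoint conditions, but poses no real difficulty once the bookkeeping is set up. Everything else is routine summation and telescoping.
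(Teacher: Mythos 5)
Your proposal is correct and follows essentially the same route as the paper: both count $\mathcal{S}_n$ via the five-part partition from the proof of \autoref{P:bijection}, obtaining $s_{n-1}$ for the shifted copy, $s_{n-1}-s_{n-2}$ for the $sh_\bullet$ part, $\binom{n}{2}+1$ for the union of the $T_{j,n}$, and a telescoping sum equal to $s_{n-1}$ for the configurations containing a segment $[1,j_1]$. The only cosmetic difference is that you count $T_{1,n}$ separately from $T_{2,n},\ldots,T_{n-1,n}$ and then apply the identity $n+\binom{n-1}{2}=\binom{n}{2}+1$, whereas the paper sums them together directly.
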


\begin{proof}
By definition, $s_0=0$. If $n=1$, the only segment configuration is $(1,1,1,\emptyset)$, so $s_1=1$. Let us assume that $n>1$. In order to count the elements in $\mathcal S_n$, we will consider the parts of the automaton $\Gamma_n$ that were described in \autoref{T:automaton}.

First, the vertices of $sh(\Gamma_{n-1})$ are in bijection with the vertices in $\Gamma_{n-1}$, so there are $s_{n-1}$.

Next, the vertices in $sh_{\bullet}(\Gamma^*_{n-1})$ are in bijection with the vertices in $\Gamma^*_{n-1}$, which are the vertices in $\Gamma_{n-1}$ which do not belong to $\Gamma_{n-2}$. There are exactly $s_{n-1}-s_{n-2}$.

Now consider the vertices in $T_{1,n}, T_{2,n},\ldots, T_{n-1,n}$. The number of vertices in $T_{j,n}$ is $n-j$, except in the case of $T_{1,n}$, which has $n$ vertices. Therefore, there are $n+(n-2)+(n-3)+\cdots +1$ vertices, which is precisely ${n \choose 2}+1$.

Finally, the set of vertices in $\overline{sh(\Gamma^*_{j})}$, for $j\in\{1,\ldots,n-1\}$, is in bijection with $\Gamma^*_j$, that is, it has $s_j-s_{j-1}$ vertices. Adding up the number of vertices of these sets, for $j=1,\ldots,n-1$, we have:
$$
  (s_{n-1}-s_{n-2})+(s_{n-2}+s_{n-3})+\cdots+(s_1-s_0)= s_{n-1}-s_0=s_{n-1}.
$$
So there are $s_{n-1}$ such vertices.

The disjoint union of all considered sets of vertices forms $\mathcal S_n$, so the number of elements in $\mathcal S_n$ is exactly:
\begin{eqnarray*}
   s_n & = & s_{n-1}+(s_{n-1}-s_{n-2})+{n\choose 2}+1+(s_{n-1}+1) \\
     & = & 3\:s_{n-1}-s_{n-2}+{n\choose 2}+1.
\end{eqnarray*}
\end{proof}

\begin{theorem}\label{T:numer_of_states}
Let $F_n$ be the $n$-th Fibonacci number ($F_0=0$, $F_1=1$, $F_{i+1}=F_i+F_{i-1}$). Let $s_n$ be the number of states of the Finite State Automaton $\Gamma_n$ accepting the language of maximal (or minimal) lexicographic representatives of elements in the positive braid monoid $A_n$. Then
$$
   s_n=\sum_{i=1}^n{\left({n+1-i \choose 2}+1\right)\cdot F_{2i}}
$$
\end{theorem}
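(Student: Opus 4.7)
The plan is to prove the formula by induction on $n$, leveraging the recurrence $s_n = 3s_{n-1} - s_{n-2} + \binom{n}{2}+1$ from \autoref{P:recurrence_relation}. Let $T_n := \sum_{i=1}^n \left(\binom{n+1-i}{2}+1\right) F_{2i}$ denote the right-hand side of the claimed formula. The base cases $T_1 = F_2 = 1 = s_1$ and $T_2 = 2F_2 + F_4 = 5 = s_2$ are immediate, so the inductive step reduces to verifying that $T_n$ satisfies the same recurrence, i.e., $T_n - 3T_{n-1} + T_{n-2} = \binom{n}{2}+1$.

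The key structural observation is that the even-indexed Fibonacci numbers satisfy $F_{2i+2} = 3F_{2i} - F_{2i-2}$; indeed, the characteristic polynomial of the recurrence is $x^2 - 3x + 1$, whose roots $(3\pm\sqrt{5})/2 = \varphi^{\pm 2}$ are precisely the quantities governing $F_{2i}$ via Binet's formula. Consequently, the coefficients $\binom{n+1-i}{2}+1$ in the formula are calibrated so that the sum part interacts cleanly with the homogeneous part of the recurrence, and the polynomial correction captures the inhomogeneous term.

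To streamline the computation, the approach is to split $T_n = A_n + B_n$, where $A_n = \sum_{i=1}^n \binom{n+1-i}{2}F_{2i}$ and $B_n = \sum_{i=1}^n F_{2i}$. The classical identity $B_n = F_{2n+1}-1$ is standard. The core technical step is to establish the companion identity
$$A_n \;=\; F_{2n+1} - \binom{n+1}{2} - 1,$$
which yields the compact closed form $T_n = 2F_{2n+1} - \binom{n+1}{2} - 2$. Verifying that this closed form satisfies the target recurrence is then a routine computation: $2F_{2n+1}$ contributes $0$ to $T_n - 3T_{n-1} + T_{n-2}$ (by the Fibonacci identity above), while the polynomial part $-\binom{n+1}{2}-2$ contributes exactly $\binom{n}{2}+1$.

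The main obstacle is establishing the identity for $A_n$, which I prove by induction: since $\binom{n+1-i}{2} - \binom{n-i}{2} = n-i$, one computes $A_n - A_{n-1} = \sum_{i=1}^{n-1}(n-i)F_{2i}$. Swapping the order of summation, this equals $\sum_{j=1}^{n-1}\sum_{i=1}^{j} F_{2i} = \sum_{j=1}^{n-1}(F_{2j+1}-1) = (F_{2n}-1) - (n-1) = F_{2n}-n$, where the crucial step invokes the standard identity $\sum_{j=0}^{m}F_{2j+1} = F_{2m+2}$. Combined with $F_{2n-1}+F_{2n} = F_{2n+1}$ and $\binom{n}{2}+n = \binom{n+1}{2}$, this closes the induction for $A_n$, and hence for the original theorem.
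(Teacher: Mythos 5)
Your proof is correct, and it follows the same overall strategy as the paper: both arguments reduce the theorem to checking that the right-hand side satisfies the recurrence $s_n=3s_{n-1}-s_{n-2}+\binom{n}{2}+1$ of \autoref{P:recurrence_relation}, and both hinge on the identity $F_{2i}=3F_{2i-2}-F_{2i-4}$. The difference lies in how the verification is carried out. The paper reindexes the two sums so that the coefficient $\binom{n+1-i}{2}+1$ is common to corresponding terms, and then applies $3F_{2i-2}-F_{2i-4}=F_{2i}$ termwise, which closes the induction in one display. You instead split the sum as $A_n+B_n$, prove the auxiliary identities $B_n=F_{2n+1}-1$ and $A_n=F_{2n+1}-\binom{n+1}{2}-1$ (your induction for $A_n$, via $\binom{n+1-i}{2}-\binom{n-i}{2}=n-i$ and $\sum_{j=0}^{m}F_{2j+1}=F_{2m+2}$, checks out), and only then verify the recurrence on the resulting closed form. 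Your route is slightly longer but yields the clean expression $s_n=2F_{2n+1}-\binom{n+1}{2}-2$, which the paper does not state and which is a worthwhile byproduct (e.g.\ it makes the asymptotics $s_n\sim 2\varphi^{2n+1}/\sqrt5$ immediate); the paper's route is shorter but leaves the answer only in summed form.
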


\begin{proof}
For $n=0$ and $n=1$ the result is clear, as $s_0=0$ and $s_1=1\cdot F_2 = 1$. Let us suppose that $n>1$ and that the result holds for smaller values of $n$. We know by \autoref{P:recurrence_relation} that
$$
  s_n  = 3\:s_{n-1}-s_{n-2}+{n\choose 2}+1.
$$
By induction hypothesis:
$$
  s_n = 3\: \left(\sum_{i=1}^{n-1}{\left({n-i \choose 2}+1\right)\cdot F_{2i}}\right)- \left(\sum_{i=1}^{n-2}{\left({n-1-i \choose 2}+1\right)\cdot F_{2i}}\right) + {n\choose 2} + 1
$$
$$
= 3\: \left(\sum_{i=2}^{n}{\left({n+1-i \choose 2}+1\right)\cdot F_{2i-2}}\right)- \left(\sum_{i=3}^{n}{\left({n+1-i \choose 2}+1\right)\cdot F_{2i-4}}\right) + {n\choose 2} + 1
$$
$$
= \left(\sum_{i=2}^{n}{\left({n+1-i \choose 2}+1\right)\cdot (3\: F_{2i-2}-F_{2i-4})}\right)+ \left({n\choose 2} + 1\right)\cdot F_2.
$$
Now we just need to see that
$$
F_{2i}+F_{2i-4}=F_{2i-1}+F_{2i-2}+ F_{2i-4} =2F_{2i-2} +F_{2i-3}+F_{2i-4}= 3 F_{2i-2}.
$$
Hence $3\: F_{2i-2}-F_{2i-4}= F_{2i}$, and we finally have:
$$
s_n = \left(\sum_{i=2}^{n}{\left({n+1-i \choose 2}+1\right)\cdot F_{2i}}\right)+ \left({n\choose 2} + 1\right)\cdot F_2
$$
$$
  = \sum_{i=1}^{n}{\left({n+1-i \choose 2}+1\right)\cdot F_{2i}},
$$
as we wanted to show.
\end{proof}

We finish the section by explicitly computing the first terms of the sequence $\{s_n\}$.
$$
\begin{array}{llllr}
   s_1 & = &  \phantom{1}1\cdot 1 & = & 1 \\
   s_2 & = &  \phantom{1}2\cdot 1 + 1\cdot 3 & = & 5 \\
   s_3 & = &  \phantom{1}4\cdot 1 + 2\cdot 3 + 1\cdot 8  & = & 18 \\
   s_4 & = &  \phantom{1}7\cdot 1 + 4\cdot 3 + 2\cdot 8 + 1\cdot 21 & = & 56 \\
   s_5 & = & 11\cdot 1 + 7\cdot 3 + 4\cdot 8 + 2\cdot 21 + 1\cdot 55 & = & 161 \\
\end{array}
$$

These numbers were computed in~\cite{GG} for $n=1,\ldots, 19$, without the knowledge of the above formula. Just to understand the size of the numbers involved, one has $s_{19}=126\: 491\: 780$.

\section{Asymptotic properties}\label{S:asymptotic}

We will now use our knowledge of the structure of the DFSA $\Gamma_n$, to determine asymptotic properties of braid monoids $A_n$. A good reference to the linear algebra used and Perron-Frobenius theory in this section is the book \cite{Sen81}.

Consider the {\it incidence matrix} $M_n$ of the DFSA $\Gamma_n$. This is a $s_n\times s_n$ matrix (where $s_n$ is the number of states of $\Gamma_n$), where each row (resp. each column) corresponds to a state of $\Gamma_n$. The entry $m_{i,j}$ of $M_n$ equals 1 if there is a transition from the state $i$ to the state $j$ in $\Gamma_n$, and equals 0 otherwise.

For instance, in \autoref{F:Gamma_2} we can see the DFSA $\Gamma_2$, whose incidence matrix is the following:
$$
    M_2=\begin{pmatrix}
      1 & 1 & 0 & 0 & 0 \\
      0 & 1 & 0 & 1 & 0 \\
      0 & 1 & 0 & 0 & 0 \\
      0 & 0 & 0 & 0 & 1 \\
      0 & 0 & 1 & 0 & 1
    \end{pmatrix}
$$

\begin{figure}
\begin{center}
\includegraphics{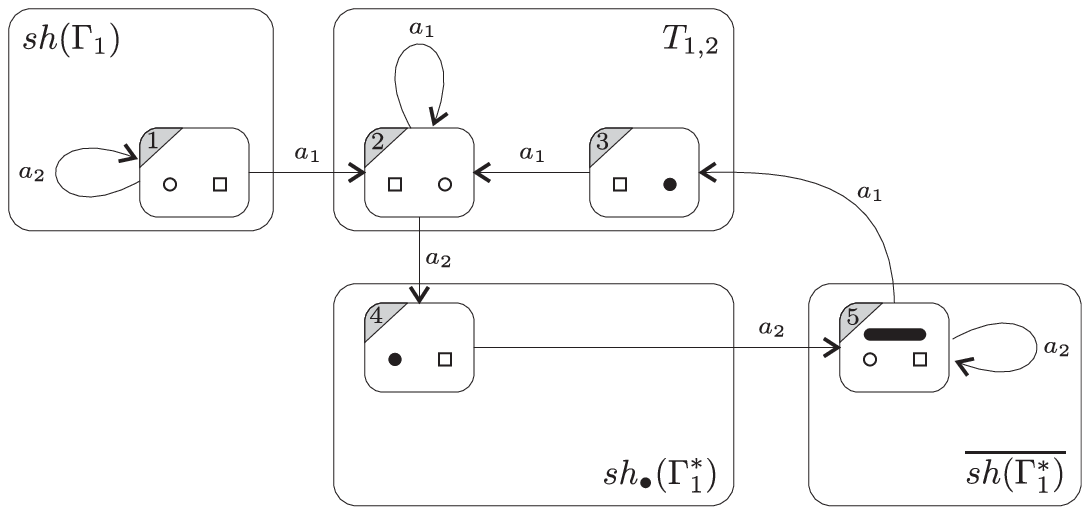}
\end{center}
\caption{The DFSA $\Gamma_2$.}
\label{F:Gamma_2}
\end{figure}

Taking powers of the matrix $M_n$, one obtains information about the number of paths from one state to another. The entry $(i,j)$ of $(M_n)^k$ is precisely the number of paths of length $k$ from the state $i$ to the state $j$. Recall that the elements in $\mathcal L_n$ (which are in bijection with the elements in $A_n$) correspond to the paths in $\Gamma_n$ starting at the initial state (state 1). Therefore, the entries of the first row of $(M_n)^k$ determine the proportion of elements of length $k$ in $\mathcal L_n$ ending at each state.

For instance, if we compute the 50th power of the matrix $M_2$ above, we obtain:
$$
(M_2)^{50}=\begin{pmatrix}
  1 & 16475640050 & 10182505536 & 10182505537 & 16475640048 \\
  0 & 10182505537 & 6293134512 & 6293134513 & 10182505537 \\
  0 & 6293134513 & 3889371024 & 3889371025 & 6293134512 \\
  0 & 6293134512 & 3889371025 & 3889371024 & 6293134513 \\
  0 & 10182505537 & 6293134513 & 6293134512 & 10182505537
\end{pmatrix}
$$

It seems clear that the probability of ending at the initial state tends to 0 as $k$ tends to infinity, since there is only one possible path ($a_2a_2a_2\cdots a_2$) ending at the initial state, and this is negligible compared to the number of paths of length $k$ ending at any other state. We slightly abuse language, by calling ``probability of ending at a state" the proportion of accepted paths ending at that state as $k$ tends to infinity. Following the analogy, the nomenclature of the next paragraph is inspired by the terminology of Markov chains.

In the previous section we have shown two important properties of $\Gamma_n$. First, the states in $sh(\Gamma_{n-1})$ are {\it transient}: once the process reaches the subgraph $\Gamma^*_n$, it will never go back to $sh(\Gamma_{n-1})$. This is why the probability of ending at a state in $sh(\Gamma_{n-1})$ tends to 0 as $k$ tends to infinity. On the other hand, the states in $\Gamma^*_n$ (all the remaining states in $\Gamma_n$) are {\it recurrent}: for every ordered pair $(v_1,v_2)$ of states in $\Gamma^*_n$, there is always a path going from $v_1$ to $v_2$. This means that the probability of ending at such a state will tend to a positive number.

It is then clear that, if we want to study the proportion of accepted paths ending at a given state (when $k$ tends to infinity), we can restrict our attention to the recurrent states, and forget the transient ones. We will then consider the incidence matrix $R_n$ whose rows (resp. columns) correspond to recurrent states. In the above example ($n=2$), we just need to remove the first row and column, and we obtain the matrix:
$$
   R_2= \begin{pmatrix}
     1 & 0 & 1 & 0 \\
     1 & 0 & 0 & 0 \\
     0 & 0 & 0 & 1 \\
     0 & 1 & 0 & 1
   \end{pmatrix}
$$
Since $R_n$ is the incidence matrix of the recurrent states, all entries of some power of $R_n$ (and all successive powers after it) will be positive integers. This means that $R_n$ is a {\it primitive} matrix.  The Perron-Frobenius eigenvalue of $R_n$ is the eigenvalue $\lambda_n$ of biggest modulus (which turns out to be a positive real number), and the Perron-Frobenius eigenvector is the only row vector $v$ (up to scalar multiplication) with positive entries, such that $v R_n=\lambda_n v$.  The proportions between the entries of this vector are precisely the limits, when $k$ tends to infinity, of the proportions between the entries of any row of $(R_n)^k$. In our case, they are the proportions of `very long' elements in $\mathcal L_n$ ending at each recurrent state.

In the particular case $n=2$, the Perron-Frobenius eigenvalue is the golden ratio $\varphi=\frac{1+\sqrt{5}}{2}=1.618033\cdots$, and the Perron-Frobenius eigenvector is $v= (\varphi, 1,1, \varphi)$.  This means that, among the elements in $\mathcal L_2$ of length $k>>1$, it is $\varphi$ times more likely to end at states 2 or 5 than to end at states 3 or 4.  We can already guess this proportion in the matrix $(M_2)^{50}$ showed above.

We also see in \autoref{F:Gamma_2} that the final letter of states 2 and 3 is $a_1$, and the final letter of states 4 and 5 is $a_2$. This implies that, when $k$ tends to infinity, the proportion of lexicographic representatives ending with $a_1$ equals $1/2$: there are as many representatives finishing with $a_1$ as representatives finishing with $a_2$.

One could think that, when $n>2$, the proportion of long representatives finishing with any given $a_j$ is not far from $1/n$. But this is not the case. The lexicographic representatives finishing with $a_1$ become more abundant. Actually, we will show that this proportion is bounded below by $\frac{1}{8}$, no matter how big is $n$.

It is well known that the eigenvalue $\lambda_n$ associated to $\Gamma_n$ measures the growth of the entries of $(R_n)^k$. Hence, $\lambda_n$ is the growth rate of the monoid $A_n$ (with respect to the standard generators $a_1,\ldots,a_n$). Fortunately, it is already known how these growth rates relate to each other, for distinct values of $n$:

\begin{theorem}\label{T:limit<4}{\rm \cite[Proposition 7.98]{Jug16}} The sequence $\{\lambda_n\}_{n\geq 1}$ of Perron-Frobenius eigenvalues (growth rates) for the braid monoids $A_n$, is a strictly increasing sequence of positive real numbers. Its limit $\lambda_{\infty}$ as $n$ tends to infinity satisfies $2.5 < \lambda_{\infty} < 4$.
\end{theorem}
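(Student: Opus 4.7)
The three assertions—strict monotonicity, $\lambda_\infty>5/2$, and $\lambda_\infty<4$—are logically independent; monotonicity and the lower bound are soft, the upper bound is the real obstacle. The parabolic inclusion $A_n\hookrightarrow A_{n+1}$ fixing $a_1,\dots,a_n$ is length-preserving and injective, hence $|\{\beta\in A_n:|\beta|=k\}|\leq|\{\beta\in A_{n+1}:|\beta|=k\}|$ for every $k$, giving $\lambda_n\leq\lambda_{n+1}$. For strict inequality I would identify $\Gamma^*_n$ with the subautomaton of $\Gamma^*_{n+1}$ whose segment configurations involve only indices $\leq n$: the primitive matrix $R_n$ then appears as a proper principal submatrix of the primitive $R_{n+1}$, and the additional states are strongly connected to the embedded $R_n$ by the structural description produced in the proof of \autoref{P:bijection}. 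The Perron--Frobenius strict-monotonicity principle then yields $\lambda_n<\lambda_{n+1}$. The lower bound $\lambda_\infty>5/2$ thus reduces to exhibiting a single $n$ with $\lambda_n>5/2$, which can be extracted numerically from the incidence matrices in the appendix or certified by a Collatz--Wielandt estimate $\lambda_n\geq\min_i(R_n v)_i/v_i$ with a positive test vector~$v$.

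\textbf{Upper bound: the main obstacle.} Every $w\in\mathcal L_n$ avoids two families of factors: $a_ia_j$ with $j\geq i+2$ (otherwise $a_ja_i$ is lex-larger) and $a_ia_{i+1}a_i$ (equivalent to $a_{i+1}a_ia_{i+1}$, lex-larger). Let $T_n$ denote the letter-level transfer matrix enforcing only the first, $T_n[i,j]=1\iff j\leq i+1$; a Chebyshev-type computation of the characteristic polynomial of $T_n$ gives the closed form
\[
  \rho(T_n)\;=\;4\cos^2\!\bigl(\pi/(n+2)\bigr),
\]
so $\lambda_n\leq\rho(T_n)<4$ for every $n$. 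However $\rho(T_n)\to 4$, so this bound alone yields only $\lambda_\infty\leq 4$. To obtain a strict limit one must incorporate the three-letter forbidden pattern, passing to the pair-level transfer matrix $U_n$ indexed by admissible pairs $(i,j)$ with $j\leq i+1$, which also forbids the transition $(i,i+1)\to(i+1,i)$. Then $\lambda_n\leq\rho(U_n)\leq\rho(T_n)$, and the plan is to determine $\lim_n\rho(U_n)$ by constructing an explicit positive left eigenvector for the limit operator $U_\infty$, with an ansatz such as $w_{(i,j)}=r^i\,t^{i+1-j}$ exploiting the two-parameter nature of the admissible pairs; optimising the resulting scalar eigenvalue identity in $(r,t)$ should give $\lim_n\rho(U_n)<4$.

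\textbf{Main difficulty and backup route.} The delicate point is that the three-letter constraint must cause a \emph{strict} drop of the spectral radius in the limit—i.e.\ the gap between $\rho(U_n)$ and $\rho(T_n)$ cannot vanish as $n\to\infty$—and quantifying this gap is the heart of the argument. If the ansatz above does not yield an explicit eigenvector, a backup route is Deligne's rational expression for the growth series of $A_n$ as $1/P_n(q)$, with $P_n(q)$ an alternating sum over simple divisors of $\Delta_n$, together with a uniform bound showing that the smallest positive root of $P_n$ stays above some $\rho>1/4$ independently of $n$, so that $\lambda_n<1/\rho<4$. Either path is substantial, but the first is closer to the explicit picture of $\Gamma_n$ developed in \autoref{S:description}.
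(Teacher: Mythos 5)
The paper does not actually prove this statement: it is imported from Jug\'e's thesis, and the only ingredient re-derived in the text is strict monotonicity, via the observation that $\Gamma_{n-1}^*$ is a proper complete subgraph of the strongly connected $\Gamma_n^*$ --- which is exactly your primitive-principal-submatrix argument. Your first paragraph is therefore sound: the length-preserving inclusion gives $\lambda_n\le\lambda_{n+1}$, Perron--Frobenius gives strictness, and $\lambda_\infty>2.5$ reduces to certifying one value (e.g.\ $\lambda_5\approx 2.5994$ in \autoref{Tbl:computations}, made rigorous by a Collatz--Wielandt lower bound with a rational test vector).

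The genuine gap is the upper bound $\lambda_\infty<4$, which is the substantive content of the theorem and which your proposal does not establish. The letter-level bound $\lambda_n\le\rho(T_n)=4\cos^2(\pi/(n+2))$ is correct (both families of factors are indeed absent from words of $\mathcal L_n$), but as you note it only gives $\lambda_\infty\le 4$. Everything then rests on $\sup_n\rho(U_n)<4$, which you assert ``should'' follow from an eigenvector ansatz that is never carried out --- and the ansatz you propose cannot work as written. In the limit pair automaton the predecessors of a state $(j,k)$ are the states $(i,j)$ with $i\ge j-1$, minus $(j-1,j)$ exactly when $k=j-1$; with $w_{(i,j)}=r^i t^{i+1-j}$ and $rt<1$ the predecessor sum equals $r^{j-1}/(1-rt)$, minus a correction only in the case $k=j-1$, so it is independent of $k$ for $k\ne j-1$, while $\mu w_{(j,k)}=\mu r^j t^{j+1-k}$ varies with $k$ unless $t=1$; and for $t=1$ the $k=j-1$ correction still breaks the identity. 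So no choice of $(r,t)$ yields an eigenvalue equation to optimise. What is actually needed is either a positive super-solution ($wU_\infty\le\mu w$ entrywise with $\mu<4$, which bounds every $\rho(U_n)$ by $\mu$) or a quantitative entropy-drop argument showing that the pattern $a_ia_{i+1}a_i$ has density bounded away from zero under the maximal-entropy measure of the staircase shift (it does: the limiting step distribution is $2^{s-2}$ for $s\le 1$, so the pattern ``$+1$ then $-1$'' has density $1/16$); neither is supplied, and the Deligne/growth-series backup is likewise only named. As it stands you have proved $\lambda_n<4$ for every $n$, but not $\lambda_\infty<4$.
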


We remark that, as we have seen, $\Gamma_{n-1}^*$ is a proper complete subgraph of $\Gamma_n^*$. This implies immediately that $\lambda_{n-1}<\lambda_n$. So the study of the automaton $\Gamma_n$ in this paper also shows that the sequence $\{\lambda_n\}$ is increasing.

We are going to use the following technical lemma, which is well-known. Recall that the \emph{spectral radius} of a square matrix is the largest absolute value of its eigenvalues.

\begin{lemma}\label{L:positive_matrix}
Let $R$ be a square matrix whose elements are nonnegative, and let $\lambda$ be a positive real number, strictly bigger than the spectral radius of $R$. Then all entries of $(\lambda I-R)^{-1}$ are nonnegative. If $R$ is primitive, then all entries of $(\lambda I-R)^{-1}$ are positive.
\end{lemma}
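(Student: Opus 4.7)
The plan is to express $(\lambda I - R)^{-1}$ as a convergent Neumann series and then read off sign information term by term. The hypothesis $\lambda > \rho(R)$ (spectral radius) gives $\rho(R/\lambda) < 1$, which is the standard sufficient condition for $\sum_{k=0}^\infty (R/\lambda)^k$ to converge in any matrix norm; this follows from Gelfand's formula $\rho(A) = \lim_k \|A^k\|^{1/k}$ applied to $A = R/\lambda$, so that $\|(R/\lambda)^k\|^{1/k} \to \rho(R/\lambda) < 1$ and the terms decay geometrically.

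Once convergence is established, one verifies in the usual way that
$$
(I - R/\lambda)\left(\sum_{k=0}^\infty (R/\lambda)^k\right) = I,
$$
by multiplying and telescoping, and then concludes
$$
(\lambda I - R)^{-1} = \frac{1}{\lambda}\sum_{k=0}^\infty (R/\lambda)^k = \sum_{k=0}^\infty \frac{R^k}{\lambda^{k+1}}.
$$

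For the first assertion, since $R$ has nonnegative entries, so does every power $R^k$, and so does every scalar multiple $R^k/\lambda^{k+1}$; hence each partial sum is entrywise nonnegative, and so is the limit. For the second assertion, primitivity of $R$ means by definition that some power $R^N$ is entrywise positive. Then the single term $R^N/\lambda^{N+1}$ in the above series is entrywise positive, while all other terms are entrywise nonnegative, so the full sum is entrywise positive.

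The only potentially delicate point is the justification of the Neumann series convergence for a matrix whose spectral radius may not equal its operator norm; but this is a textbook fact (e.g.\ via the Jordan form, which shows that $\|A^k\| = O(k^{d-1} \rho(A)^k)$ for an $d \times d$ matrix $A$), so no real obstacle arises. Everything else is routine entrywise bookkeeping.
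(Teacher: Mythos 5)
Your proof is correct and follows essentially the same route as the paper's: both expand $(\lambda I - R)^{-1}$ as the Neumann series $\sum_{k\geq 0} R^k/\lambda^{k+1}$, which converges because $\rho(\lambda^{-1}R)<1$, and then read off nonnegativity (resp.\ positivity, using a positive power of a primitive $R$) termwise. Your extra justification of convergence via Gelfand's formula is a welcome detail the paper leaves implicit.
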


\begin{proof}
Since $\lambda$ is bigger than the spectral radius of $R$, the spectral radius of $\lambda^{-1}R$ is smaller than 1. In this case, we have the convergent sum:
$$
  (I-\lambda^{-1}R)^{-1} = I + (\lambda^{-1}R)+(\lambda^{-1}R)^2+(\lambda^{-1}R)^3+\cdots
$$
Multiplying by $\lambda^{-1}$, we have:
$$
  (\lambda I-R)^{-1} = \lambda^{-1}I + \lambda^{-2}R+\lambda^{-3}R^2+\lambda^{-4}R^3+\cdots
$$
Since all entries of $R$ are nonnegative, the above (convergent) infinite sum is made of matrices with nonnegative entries, hence all entries of $(\lambda I-R)^{-1}$ are nonnegative. If furthermore $R$ is primitive, then some power of $R$ has all entries positive, so all entries of $(\lambda I-R)^{-1}$ are positive.
\end{proof}

In order to study the Perron-Frobenius eigenvector $v$ of the incidence matrix $R_n$, we will interpret it as follows: The vector $v=(v_1,v_2,\ldots,v_{m})$ is an assignment of a positive real number $v_i$ to each recurrent state $S_i$ of $\Gamma^*_n$. Recall that $v R_n = \lambda_n v$. This means that, for every state $S_j$, if we denote $\mbox{Pred}(S_j)=\{S_{i_1},\ldots,S_{i_k}\}$ the states of $\Gamma^*_n$ for which there is a transition $S_{i_r}\longrightarrow S_j$, which are called the {\it direct predecessors} of $S_j$, then we must have
$$
   v_{i_1}+\cdots +v_{i_k}= \lambda_n v_j.
$$
Moreover, the vector $v$ satisfying this property is unique, up to scalar multiplication. So we can normalize it  in such a way that $v_1+\cdots +v_m=1$.  In this way, the value $v_i$ is the probability that a `very long' lexicographic representative ends at the state $S_i$.

Let us introduce some notation. Suppose that $\Gamma$ is a DFSA all of whose states are recurrent. Hence its incidence matrix $R$ is primitive and has a Perron-Frobenius eigenvalue $\lambda$. Let $\mathcal I=\{i_1,\ldots,i_t\}$ be a proper subset of vertices of $\Gamma$, and let $\Gamma_{\mathcal I}$ be the complete subgraph of $\Gamma$ determined by these vertices. Let $R_{\mathcal I}$ be the incidence matrix of $\Gamma_{\mathcal I}$. As $R_{\mathcal I}$ can be seen as a submatrix of $R$, it is known that its spectral radius is smaller than  $\lambda$.

Let $v=(v_1,\ldots,v_m)$ be the normalized Perron-Frobenius eigenvector associated to $\Gamma$. Given a state $S_{i}$ of $\Gamma_{\mathcal I}$, we will be interested in the direct predecessors of $S_{i}$ which do not belong to $\Gamma_{\mathcal I}$, as they are the states which, informally speaking, `introduce probability' in $\Gamma_{\mathcal I}$. So we denote, for every $i\in \mathcal I$:
$$
   P_{i,\mathcal I}=\mbox{Pred}(S_{i})\setminus \Gamma_{\mathcal I}.
$$
We will measure the `incoming probability' at the state $S_{i}$ with the following number:
$$
   p_{i,\mathcal I}=\sum_{j\in P_{i,\mathcal I}}v_j.
$$
And the total `incoming probability' of the subset $\mathcal I$ will be determined by a vector
$$
  p_{\mathcal I}= (p_{i_1,\mathcal I},\; p_{i_2,\mathcal I},\; \ldots,\; p_{i_t,\mathcal I}).
$$
As usual, we will compare vectors $u=(u_1,\ldots,u_t)$ and $w=(w_1,\ldots,w_t)$ of the same length, by saying that $u\leq w$ if $u_i\leq w_i$ for $i=1,\ldots,t$.

Let us see that, under some conditions, the coordinates of $p_{\mathcal I}$ may give some information about the coordinates of the eigenvector $v$ inside $\mathcal I$.

\begin{proposition}\label{P:subgraphs_comparison}
Let $\Gamma$ be a DFSA whose vertices are all recurrent, so its incident matrix $R$ is primitive. Let $\lambda$ be its Perron-Frobenius eigenvalue, and $v=(v_1,\ldots,v_m)$ its normalized Perron-Frobenius eigenvector. Let $\mathcal I=\{i_1,\ldots,i_t\}$ and $\mathcal J=\{j_1,\ldots,j_t\}$ be two proper subsets of vertices of $\Gamma$, such that the subgraphs $\Gamma_{\mathcal I}$ and $\Gamma_{\mathcal J}$ are isomorphic (by the map sending $i_k$ to $j_k$ for $k=1,\ldots,t$). If $p_{\mathcal I}\geq \alpha\: p_{\mathcal J}$ for some positive real number $\alpha$, then:
$$
   (v_{i_1},\ldots,v_{i_t})\geq \alpha\: (v_{j_1},\ldots,v_{j_t}).
$$
Moreover, if the incidence matrix for $\mathcal I$ is primitive, and $p_{\mathcal I}> \alpha\: p_{\mathcal J}$, then
$$
   (v_{i_1},\ldots,v_{i_t})> \alpha\: (v_{j_1},\ldots,v_{j_t}).
$$
\end{proposition}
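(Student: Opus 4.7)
The plan is to derive a closed-form expression for $v_{\mathcal I}$ in terms of $p_{\mathcal I}$ by restricting the eigenvector equation to $\mathcal I$, and then to apply \autoref{L:positive_matrix} to transfer the entrywise inequalities through the resulting linear system.

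First I would write the defining equation $vR_n=\lambda_n v$ coordinate by coordinate at each vertex $S_i$ with $i\in\mathcal I$. Splitting the set of predecessors $\mathrm{Pred}(S_i)$ into those lying inside $\Gamma_{\mathcal I}$ and those lying outside gives
$$
\lambda\, v_i \;=\; \sum_{k\to i,\; k\in \mathcal I} v_k \;+\; \sum_{k\in P_{i,\mathcal I}} v_k \;=\; (v_{\mathcal I} R_{\mathcal I})_i \;+\; p_{i,\mathcal I},
$$
where $v_{\mathcal I}=(v_{i_1},\dots,v_{i_t})$ is viewed as a row vector. Collecting these $t$ equations yields the matrix identity
$$
v_{\mathcal I}(\lambda I - R_{\mathcal I}) \;=\; p_{\mathcal I}.
$$
Since the setup already guarantees that the spectral radius of the submatrix $R_{\mathcal I}$ is strictly less than $\lambda$, the matrix $\lambda I-R_{\mathcal I}$ is invertible, and
$$
v_{\mathcal I} \;=\; p_{\mathcal I}\,(\lambda I - R_{\mathcal I})^{-1}.
$$
The same argument applied to $\mathcal J$ gives $v_{\mathcal J}=p_{\mathcal J}\,(\lambda I-R_{\mathcal J})^{-1}$.

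Next I would use the hypothesis that $\Gamma_{\mathcal I}$ and $\Gamma_{\mathcal J}$ are isomorphic via the bijection $i_k\leftrightarrow j_k$. Under the orderings $i_1<\dots<i_t$ and $j_1<\dots<j_t$ this isomorphism implies $R_{\mathcal I}=R_{\mathcal J}$ as $t\times t$ matrices, and hence $(\lambda I-R_{\mathcal I})^{-1}=(\lambda I-R_{\mathcal J})^{-1}$. Call this common matrix $N$. By \autoref{L:positive_matrix}, applied with $R=R_{\mathcal I}$ and the bound on spectral radius above, all entries of $N$ are nonnegative; if furthermore $R_{\mathcal I}$ is primitive then all entries of $N$ are strictly positive.

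Finally, from $p_{\mathcal I}\geq \alpha\, p_{\mathcal J}$ the vector $p_{\mathcal I}-\alpha\, p_{\mathcal J}$ has nonnegative entries. Right-multiplying by the entrywise nonnegative matrix $N$ preserves this, so
$$
v_{\mathcal I}-\alpha\, v_{\mathcal J} \;=\; (p_{\mathcal I}-\alpha\, p_{\mathcal J})\,N \;\geq\; 0,
$$
which is the desired inequality. For the strict statement, if $p_{\mathcal I}>\alpha\, p_{\mathcal J}$ (strict in every coordinate) and $N$ has all positive entries, then every coordinate of $(p_{\mathcal I}-\alpha\, p_{\mathcal J})N$ is a positive linear combination of strictly positive quantities, and hence strictly positive. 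The only mild obstacle is the bookkeeping step of verifying that the restriction of $vR_n=\lambda_n v$ to indices in $\mathcal I$ really takes the matrix form $\lambda v_{\mathcal I}=v_{\mathcal I}R_{\mathcal I}+p_{\mathcal I}$; once this is set up correctly, the rest is a direct consequence of the nonnegativity (resp. positivity) of $(\lambda I-R_{\mathcal I})^{-1}$ supplied by \autoref{L:positive_matrix}.
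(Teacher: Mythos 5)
Your proposal is correct and follows essentially the same route as the paper: restrict the eigenvector equation $vR=\lambda v$ to the coordinates in $\mathcal I$, split the predecessor sum into the part governed by $R_{\mathcal I}$ and the incoming probability $p_{\mathcal I}$, invert $\lambda I-R_{\mathcal I}$, and transfer the inequality using the nonnegativity (resp. positivity) of the inverse from \autoref{L:positive_matrix}. No gaps.
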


\begin{proof}
We know that, for every $i\in \mathcal I$, the sum of all $v_k$ with $S_k\in \mbox{Pred}(S_{i})$ equals $\lambda v_{i}$. This sum can be split in two: on one side we can take the states which belong to $\Gamma_{\mathcal I}$, which are determined by the matrix $R_{\mathcal I}$, and on the other side we can take the states which do not belong to $\Gamma_{\mathcal I}$, which determine the `incoming probability' $p_{i,\mathcal I}$. Collecting this information for all $i\in \mathcal I$, we have the following:
$$
   (v_{i_1}\cdots v_{i_t})R_{\mathcal I} + p_{\mathcal I} = \lambda (v_{i_1}\cdots v_{i_t}).
$$
Hence:
$$
  (v_{i_1}\cdots v_{i_t})(\lambda I -R_{\mathcal I})=  p_{\mathcal I}.
$$
Since $\lambda$ is bigger than the greatest eigenvalue of $R_{\mathcal I}$, the matrix $(\lambda I -R_{\mathcal I})$ is invertible. So we have:
$$
  (v_{i_1}\cdots v_{i_t})=  p_{\mathcal I} \: (\lambda I -R_{\mathcal I})^{-1}
$$
In the same way, we have:
$$
  (v_{j_1}\cdots v_{j_t})=  p_{\mathcal J} \: (\lambda I -R_{\mathcal J})^{-1}
$$
Now we just need to recall that $R_{\mathcal I}=R_{\mathcal J}$, and to notice that, as $\lambda$ is greater than the spectral radius of $R_{\mathcal I}$, \autoref{L:positive_matrix} ensures that all entries of $(\lambda I -R_{\mathcal I})^{-1} $ are nonnegative. Therefore:
$$
  (v_{i_1}\cdots v_{i_t})=  p_{\mathcal I} \: (\lambda I -R_{\mathcal I})^{-1} \geq \alpha \:  p_{\mathcal J} \: (\lambda I -R_{\mathcal I})^{-1}
$$
$$
  = \alpha \:  p_{\mathcal J} \: (\lambda I -R_{\mathcal J})^{-1} = \alpha\: (v_{j_1}\cdots v_{j_t}),
$$
as we wanted to show.

If the matrix $R_{\mathcal I}$ is primitive, then by \autoref{L:positive_matrix} $(\lambda I -R_{\mathcal I})^{-1}$ has positive entries. Hence, if $p_{\mathcal I}> \alpha p_{\mathcal J}$, we have:
$$
  (v_{i_1}\cdots v_{i_t})=  p_{\mathcal I} \: (\lambda I -R_{\mathcal I})^{-1} > \alpha \:  p_{\mathcal J} \: (\lambda I -R_{\mathcal I})^{-1}
$$
$$
  = \alpha \:  p_{\mathcal J} \: (\lambda I -R_{\mathcal J})^{-1} = \alpha\: (v_{j_1}\cdots v_{j_t}),
$$
\end{proof}

Now we can finally use the above results to show that lexicographic representatives in $A_n$ that finish with $a_1$ do not become negligible as $n$ tends to infinity.

\begin{theorem}\label{T:generic_braids_end_with_a1}
Let $P_{n,1}$ be the limit, when $k$ tends to infinity, of the proportion of maximal lexicographic representatives of length $k$ in $A_n$ finishing with $a_1$. Then $P_{n,1}>\frac{1}{8}$ for every $n\geq 1$.
\end{theorem}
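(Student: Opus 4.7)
\emph{Step 1 (identification).} Using \autoref{P:necessary_conditions} (whose last sentence says that the final letter associated with the segment configuration $(i,j,k,S)$ is $a_j$), the states of $\Gamma^*_n$ whose final letter is $a_1$ are exactly the $n$ elements of $T_{1,n}=\{(1,1,k,\emptyset):k=1,\ldots,n\}$, so that $P_{n,1}=\sum_{S\in T_{1,n}}v_S$, where $v$ is the normalized Perron--Frobenius eigenvector of $R_n$.

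\emph{Step 2 (balance identity).} Summing the eigenvalue equation $\lambda_n v_j=\sum_{S_i\in\mathrm{Pred}(j)}v_i$ over $j\in T_{1,n}$ and using that every arrow into $T_{1,n}$ carries the label $a_1$, the right-hand side collapses to $\sum_i v_{S_i}$ taken over the recurrent states admitting an outgoing $a_1$-arrow. From the case analysis carried out in the proof of \autoref{P:bijection}, the recurrent states having $a_1$ in their forbidden set (equivalently, a black circle at position $1$ and no segment $[1,\cdot]$) are exactly those in $sh_{\bullet}(\Gamma^*_{n-1})$. Writing $\sigma_n$ for the total $v$-mass of $sh_{\bullet}(\Gamma^*_{n-1})$ and using $\sum_S v_S=1$, this yields the key identity
\[
\lambda_n P_{n,1}+\sigma_n=1.
\]
Combined with $\lambda_n<4$ from \autoref{T:limit<4}, this reduces the theorem to proving $\sigma_n<1/2$.

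\emph{Step 3 (comparison).} The only external inflow into $sh_{\bullet}(\Gamma^*_{n-1})$ comes from the $a_2$-arrows out of $T_{1,n}$, which land on the states $(1,2,k,\emptyset)$; hence its total weight is at most $P_{n,1}$. I would apply \autoref{P:subgraphs_comparison} by pairing $\mathcal I=sh_{\bullet}(\Gamma^*_{n-1})$ with $\mathcal J=\overline{sh(\Gamma^*_{n-1})}$, the other natural copy of $\Gamma^*_{n-1}$ inside $\Gamma^*_n$ (the one carrying the segment $[1,n]$). After excising from each side the image of $T_{1,n-1}$ --- the $n-1$ states in each copy whose final letter is $a_2$, so that all arrows pointing at them are $a_2$-labelled --- the two remaining induced subgraphs become isomorphic to $\Gamma^*_{n-1}$ with $T_{1,n-1}$ removed, so \autoref{P:subgraphs_comparison} applies. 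Comparing the two inflow vectors (with the help of \autoref{L:positive_matrix} for the nonnegativity of $(\lambda_n I-R_{\mathcal I})^{-1}$) and feeding the resulting inequality back into the balance identity of Step~2 yields $\sigma_n<1/2$, hence $P_{n,1}>1/8$.

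\emph{Main obstacle.} The technical heart of the proof is Step~3. The ``raw'' induced subgraphs of $sh_{\bullet}(\Gamma^*_{n-1})$ and $\overline{sh(\Gamma^*_{n-1})}$ are not themselves isomorphic, because the $a_2$-arrows that stay internal to $\overline{sh(\Gamma^*_{n-1})}$ (the segment $[1,n]$ being preserved) escape from $sh_{\bullet}(\Gamma^*_{n-1})$: they create a segment $[1,2]$ and so fall into $\overline{sh(\Gamma^*_1)}$. The excision of the two $T_{1,n-1}$-images restores the isomorphism, but verifying that the resulting inflow comparison is strong enough to cross the $1/2$-threshold uniformly in $n$ is where the actual work sits.
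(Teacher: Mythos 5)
Your Steps 1 and 2 are correct and coincide with the paper's endgame: every state of $\Gamma^*_n$ outside $sh_{\bullet}(\Gamma^*_{n-1})$ has exactly one outgoing $a_1$-arrow, all such arrows land in $T_{1,n}$, and all arrows into $T_{1,n}$ are $a_1$-labelled, so $\lambda_n P_{n,1}=1-\sigma_n$ and the theorem reduces, via $\lambda_n<4$, to $\sigma_n<1/2$. The gap is Step 3, which is the entire content of the proof. The comparison you propose between $\mathcal I=sh_{\bullet}(\Gamma^*_{n-1})$ and $\mathcal J=\overline{sh(\Gamma^*_{n-1})}$ cannot be pushed through \autoref{P:subgraphs_comparison} as described: after excising the two images of $T_{1,n-1}$ the induced subgraphs do become isomorphic (all $a_1$-arrows of $\Gamma^*_{n-1}$ target $T_{1,n-1}$, so they disappear from both sides), but the resulting inflow vectors $p_{\mathcal I'}$ and $p_{\mathcal J'}$ are dominated by the eigenvector coordinates of the excised states themselves --- the $t_{1,k}^{(1)}$ on one side, the states $(1,2,\cdot,\{[1,n]\})$ on the other --- and these coordinates are produced by entirely different mechanisms (external inflow from $T_{1,n}$ in the first case, internal recirculation of the complete copy $\overline{sh(\Gamma^*_{n-1})}$, plus inflow from $T_{n-1,n}$ and $\overline{sh(\Gamma^*_{n-2})}$, in the second). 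Comparing them componentwise requires knowledge of $v$ that is precisely what is being established, so the argument is circular. You flag this yourself as ``where the actual work sits,'' but that work is the theorem.

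The paper closes this gap with a different decomposition. It never compares $sh_{\bullet}(\Gamma^*_{n-1})$ with $\overline{sh(\Gamma^*_{n-1})}$ globally; instead it writes $sh_{\bullet}(\Gamma^*_{n-1})=B_1\sqcup\cdots\sqcup B_{n-2}\sqcup T_{1,n}^{(1)}\sqcup\cdots\sqcup T_{1,n}^{(n-1)}$, where $B_r$ collects the $r$-fold $sh_{\bullet}$-shifts of the small blocks $\Gamma^*_m$ and $t_{j,k}$, and proves $v(B_0)>\lambda_n^{r}\,v(B_r)$ and $v(T_{1,n})>\lambda_n^{r}\,v(T_{1,n}^{(r)})$ by induction along a partial order on these blocks: each block's external predecessors lie in strictly smaller blocks or in some $T_{1,n}^{(r)}$, whose coordinates are computed exactly because each $t_{1,k}^{(r)}$ ($r\geq 1$) has a unique predecessor. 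Thus \autoref{P:subgraphs_comparison} is applied block by block with inflows that are already controlled, and summing the geometric series with $\lambda_n>2$ gives $\sigma_n< v(B_0)+v(T_{1,n})=1-\sigma_n$. If you want to salvage your route, you would need an independent a priori bound on the excised coordinates; the inductive block decomposition is exactly the device that supplies it.
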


\begin{proof}
It is clear that $P_{1,1}=1$, and we already saw that $P_{2,1}=\frac{1}{2}$. So we can assume that $n\geq 3$.

In \autoref{T:automaton} we unveiled the structure of the automaton $\Gamma_n$, and hence of $\Gamma^*_n=\Gamma_n \setminus sh(\Gamma_{n-1})$. Recall from there that we denote $T_{1,n}=\{(1,1,k,\emptyset); \ k=1,\ldots,n\}\subset \mathcal S_n$. From the structure of $\Gamma_n$ we see that the only states in $\Gamma^*_n$ with incoming arrows labeled $a_1$ are those in $T_{1,n}$.  Hence, a lexicographic representative finishing with $a_1$ must end at a state in $T_{1,n}$.

Given a subgraph $\Gamma$ of $\Gamma^*_n$, we will denote $v(\Gamma)$ the sum of the coordinates of $v$  corresponding to the states of $\Gamma$, where $v$ is the normalized Perron-Frobenius eigenvector for $\Gamma^*_n$. Notice that $P_{n,1}=v(T_{1,n})$. Hence, we must show that $ v(T_{1,n})>\frac{1}{8}$.

We will distinguish several subgraphs of $\Gamma^*_n$, and we will describe isomorphisms between some of them. Recall that if $w=(1,j,k,S)$ is a state of $\Gamma^*_n$ not involving the number $n$, we defined $sh_{\bullet}(w)=(1,j',k',S')$, where $i'$, $j'$ and $S'$ are obtained from $i$, $j$ and $S$ by increasing one unit all the numbers involved. We can keep applying $sh_{\bullet}$ as long as the numbers involved are smaller than $n$. To simplify the notation, for every state $w$ we will write $w^{(r)}=sh_{\bullet}^r(w)$ whenever this makes sense, that is, when the numbers involved in $w=(1,j,k,S)$ are at most $n-r$.

We already mentioned the subset of states $T_{1,n}=\{(1,1,k,\emptyset); \ k=1,\ldots,n\}$. We will denote $t_{1,k}=(1,1,k,\emptyset)$, for $k=1,\ldots,n$.  Whenever $k+r\leq n$, we can consider the state $t_{1,k}^{(r)}=(1,1+r,k+r,\emptyset)$.  Let us see which arrows in $\Gamma^*_n$ end at theses states.

The elements in $T_{1,n}$ have many incoming arrows, all labeled $a_1$ (see \autoref{F:Gamma_n}). On the other hand, a state $t_{1,k}^{(1)}$ has all incoming arrows labeled $a_2$, and it belongs to $sh_{\bullet}(\Gamma^*_{n-1})$, which is almost identical to $\Gamma^*_{n-1}$: the inner arrows in $sh_{\bullet}(\Gamma^*_{n-1})$ are precisely the inner arrows in $\Gamma^*_{n-1}$ (shifted one unit), except those labeled $a_1$.  It follows that $sh_{\bullet}(\Gamma^*_{n-1})$ has no inner arrow labeled $a_2$. So there are no inner arrows in $sh_{\bullet}(\Gamma^*_{n-1})$ ending at a state $t_{1,k}^{(1)}$. Hence, the arrows ending at $t_{1,k}^{(1)}$ in $\Gamma^*_{n}$ must come from outside $sh_{\bullet}(\Gamma^*_{n-1})$. By \autoref{T:automaton} we see that the only incoming arrows are either
$$
   t_{1,1} \stackrel{a_2}{\longrightarrow} t_{1,1}^{(1)}
$$
or
$$
   t_{1,k+1} \stackrel{a_2}{\longrightarrow} t_{1,k}^{(1)}
$$
for $k=2,\ldots,n-1$.  See \autoref{F:Gamma_5}, in which we have sketched the case $n=5$.

If $r>1$, as every arrow ending at $t_{1,k}^{(r)}$ is labeled $a_{r+1}$ (and this is neither $a_1$ nor $a_2$), it must be the image under $sh_{\bullet}$ of an arrow in $\Gamma^*_{n-1}$. It follows that the only such arrows are:
$$
  t_{1,1}^{(r-1)} \stackrel{a_{r+1}}{\longrightarrow} t_{1,1}^{(r)}
$$
and
$$
  t_{1,k+1}^{(r-1)} \stackrel{a_{r+1}}{\longrightarrow} t_{1,k}^{(r)}
$$
for $k=2,\ldots,n-r$. See \autoref{F:Gamma_5}.

\begin{figure}
\begin{center}
  \includegraphics{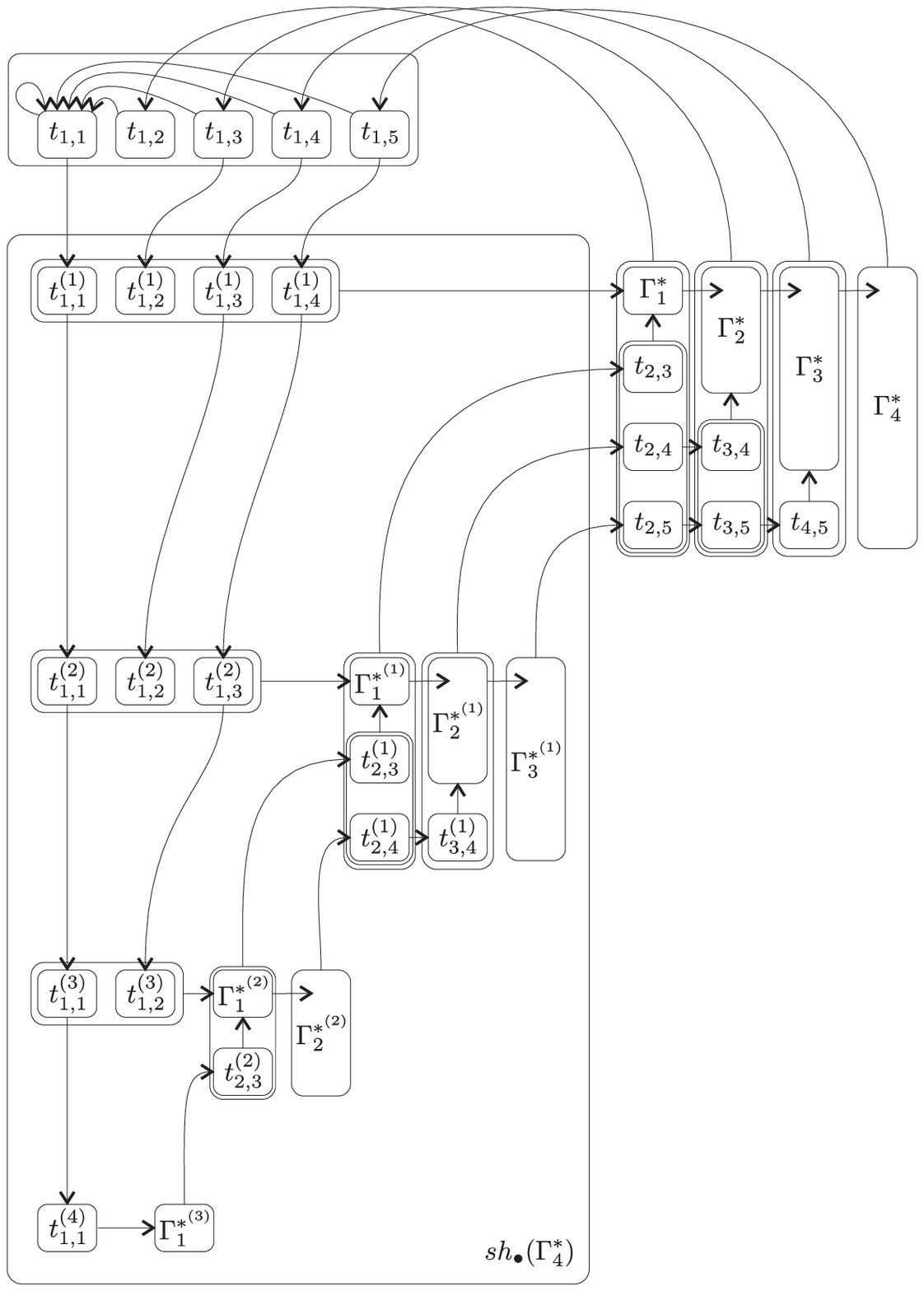}
\end{center}
\caption{Outline of $\Gamma^*_5$, highlighting the transitions which repeat at different parts of the graph.}
\label{F:Gamma_5}
\end{figure}

We then have $\mbox{Pred}(t_{1,1}^{(r)})=\{t_{1,1}^{(r-1)}\}$ and $\mbox{Pred}(t_{1,k}^{(r)})=\{t_{1,k+1}^{(r-1)}\}$, for all $r=1,\ldots,n-1$ and $k=2,\ldots,n-r$. This shows a relation between the coordinates of the Perron-Frobenius eigenvector $v$, associated to these states:
$$
  v(t_{1,1}^{(r-1)})=\lambda_n\; v(t_{1,1}^{(r)}), \qquad  v(t_{1,k+1}^{(r-1)})=\lambda_n\; v(t_{1,k}^{(r)}).
$$
We can collect these states, denoting (by abuse of notation) $T_{1,n}^{(r)}=\{t_{1,k}^{(r)};\ k=1,\ldots,n-r\}$, for $i=0,\ldots,n-1$. All the direct predecessors of the states in $T_{1,n}^{(r)}$ belong to $T_{1,n}^{(r-1)}$, but there is one extra state in $T_{1,n}^{(r-1)}$, namely $t_{1,2}^{(r-1)}$, which is not one of these predecessors, but whose Perron-Frobenius coordinate is nonzero. Therefore:
$$
     v(T_{1,n}^{(r-1)}) > \lambda_n \; v(T_{1,n}^{(r)}),
$$
for $r=1,\ldots,n-1$.

Now consider the states in $\Gamma^*_n$ which belong to neither $T_{1,n}$ nor $sh_{\bullet}(\Gamma^*_{n-1})$. By \autoref{T:automaton}, these states form the subgraphs $T_{2,n},\ldots,T_{n-1,n}$ and $\overline{sh(\Gamma^*_1)}, \ldots,$ $\overline{sh(\Gamma^*_{n-1})}$. In order to simplify the notation, we will just denote $\Gamma^*_m$ the subgraph $\overline{sh(\Gamma^*_m)}$, for $m=1,\ldots,n-1$ (see \autoref{F:Gamma_5}).

Recall that, for $j\geq 2$, we have $T_{j,n}=\{(1,j,k,\{[1,j]\});\ k=j+1,\ldots,n\}$. Let us denote $t_{j,k}=(1,j,k,\{[1,j]\})$, for $2\leq j<k\leq n$. We have already seen in \autoref{T:automaton} that the arrows starting at such a state are precisely the following ones:
$$
   t_{j,k}\stackrel{a_{j+1}}{\longrightarrow} t_{j+1,k}
$$
if $k>j+1$, and
$$
   t_{j,k}\stackrel{a_j}{\longrightarrow} (1,j,j,\{[1,j],[j-1,j]\})\in \Gamma^*_{j-1},\qquad t_{j,k}\stackrel{a_1}{\longrightarrow} (1,1,j,\emptyset)\in T_{1,n}.
$$
These arrows are represented in \autoref{F:Gamma_5}.

We also know that these states have no other incoming arrow, except when $j=2$. The state $t_{2,k}$ (with $k>2$), as we saw in \autoref{T:automaton}, has an incoming arrow labeled $a_2$ from every state in $sh_{\bullet}(\Gamma^*_{n-1})$ whose diagram has a segment $[2,k]$ (and of course a black circle at position 1).  These states are precisely the image under $sh_{\bullet}$ of the states in $\Gamma^*_{k-2}$, and of the states $t_{k-1,l}$, for $l=k,\ldots,n-1$. We will denote $\Gamma^{*^{(1)}}_{m}$ the image under $sh_{\bullet}$ of $\Gamma^*_m$, for $m=1,\ldots,n-2$ (for $m=n-1$ the shifting is not possible), and we will denote $t_{j,k}^{(1)}=sh_{\bullet}(t_{j,k})$, where $k<n$ (for $k=n$ the shifting is not possible). We have then seen that the arrows arriving to $t_{2,k}$ come from every state in $\Gamma^{*^{(1)}}_{k-2}$ and from every state $t_{k-1,l}^{(1)}$, for $l=k,\ldots,n-1$. See \autoref{F:Gamma_5}.

We are now interested in the arrows starting or finishing at some $\Gamma^*_{m}$, which are not inner arrows. This information can also be obtained from \autoref{T:automaton}. There is an arrow labeled $a_2$ from every state $t_{1,k}^{(1)}$ ($k=1, \ldots,n-1$), to the only state $(1,2,2,\{[1,2]\})\in \Gamma^*_1$.  And there is an arrow labeled $a_{m+2}$ from every arrow in $\Gamma^*_{m}$ whose diagram has a square at position $m+1$, to a state in $\Gamma^*_{m+1}$ whose diagram has square at position $m+2$. Also, there is an arrow labeled $a_1$ from every state in $\Gamma^*_{m}$ to $t_{1,m+1}$.

We have then the whole picture of the arrows connecting the states in $\Gamma^*_1,\ldots,\Gamma^*_{n-1}$ and the states $t_{j,k}$ with $2\leq j<k\leq n$.

The important observation now is that every state in $sh_{\bullet}(\Gamma^*_{n-1})$ which is not of the form $t_{1,k}^{(r)}$, is the image under a power of $sh_{\bullet}$ of some of the states described in the previous paragraph. Indeed, the diagram of any element in $sh_{\bullet}(\Gamma^*_{n-1})$ starts with a black circle. Suppose that it has black circles at positions $1,\ldots,r$ but not at $r+1$. If it has a square at position $r+1$, then it is one of the states of the form $t_{1,k}^{(r)}$. Otherwise, it has a segment of the form $[r+1,r+m]$. But then it is equal to $w^{(r)}$, for some state $w$ which either belongs to $\Gamma^*_{m-1}$, or has the form $t_{m,k}$.

Since applying $sh_{\bullet}$ respects the transitions not labeled $a_1$ and increases the indices by 1, it follows that we can describe all transitions between the states in $sh_{\bullet}(\Gamma^*_{n-1})$. They are precisely as described in \autoref{F:Gamma_5}.

In particular, this description allows us to notice that there are several isomorphic subgraphs in $\Gamma^*_n$: Every $\Gamma_m^{*^{(r)}}$ is isomorphic, via $sh_{\bullet}$ to $\Gamma_m^{*^{(r+1)}}$, for every $r=0,\ldots,n-m-2$. We have now all the needed information to compare the values of the coordinates of $v$ corresponding to the different subgraphs of $\Gamma^*_n$.

We claim that, for $r=1,\ldots,n-2$:
$$
   v(\Gamma_m^{*^{(r-1)}}) > \lambda_n \; v(\Gamma_m^{*^{(r)}})
$$
and
$$
  v(t_{j,k}^{(r-1)}) > \lambda_n \; v(t_{j,k}^{(r)}).
$$

In order to show this claim, we will define a partial order on the set of subgraphs
$$
 \mathcal B= \left\{\Gamma_m^{*^{(r)}}\right\}_{r=1,\ldots,n-2 \atop m=1,\ldots,n-r-1} \bigcup \left\{t_{j,k}^{(r)}\right\}_{ r=1,\ldots,n-3 \atop 2\leq j<k\leq n-r},
$$
by saying that $\Gamma$ is smaller than $\Delta$ if there is an arrow from a state of $\Gamma$ to a state of $\Delta$, and taking the transitive closure of this relation. We see in \autoref{F:Gamma_5} that there is a unique minimal subgraph with respect to this partial order, namely $\Gamma_{1}^{*^{(n-2)}}$. We will start by showing the claim for this one.

The subgraph $\Gamma_{1}^{*^{(n-2)}}$ consists of a single state (with an arrow starting at ending at it). The incoming probability of this subgraph is $p_{\mathcal J}=(v(t_{1,1}^{(n-1)}))$. On the other hand, $\Gamma_{1}^{*^{(n-3)}}$ is isomorphic to $\Gamma_{1}^{*^{(n-2)}}$, and its incoming probability is $p_{\mathcal I}=(v(t_{1,1}^{(n-2)})+v(t_{1,2}^{(n-2)})+v(t_{2,3}^{(n-3)}))$.

We already know that $v(t_{1,1}^{(n-2)}) = \lambda_n\; v(t_{1,1}^{(n-1)})$. Hence $p_{\mathcal I}>\lambda_n p_{\mathcal J}$. Therefore, by \autoref{P:subgraphs_comparison}:
$$
   v(\Gamma_1^{*^{(n-3)}}) >  \lambda_n \; v(\Gamma_1^{*^{(n-2)}}).
$$

Now take a subgraph $\Gamma$ in $\mathcal B$, and assume that the claim holds for smaller subgraphs (with respect to the described partial order). We know that there exists a subgraph $\Delta$ of $\Gamma^*_n$ such that $\Gamma=sh_{\bullet}(\Delta)$, where $sh_{\bullet}$ induces a subgraph isomorphism. Moreover, the arrows with target in $\Gamma$ come either from subgraphs of $\mathcal B$ which are smaller than $\Gamma$, or from $T_{1,n}^{(r)}$ for some $r>1$. Hence, for every arrow $w\stackrel{a_i}{\longrightarrow} w'$ with $w\notin \Gamma$ and $w'\in \Gamma$, there exists an arrow $u\stackrel{a_{i-1}}{\longrightarrow} u'$ with $u\notin \Delta$ and $u'\in \Delta$, where $sh_{\bullet}(u)=w$ and $sh_{\bullet}(u')=w'$. Therefore, the `incoming probability' for the graphs $\Gamma$ and $\Delta$ satisfies
$$
   p_{\Delta} > \lambda_n \; p_{\Gamma}.
$$
Now notice that if $\Gamma$ is a single state (with no arrow starting and finishing at it), it is immediate that $v(\Delta)>\lambda_n v(\Gamma)$. Otherwise, $\Gamma$ is isomorphic to $\Gamma^*_m$ for some $m<n-1$. Hence, its incidence matrix is primitive. Therefore, by \autoref{P:subgraphs_comparison}, we have
$$
  v(\Delta)>\lambda_n \;v(\Gamma),
$$
showing the claim.

We can now collect all the information as follows. For $r=0,\ldots,n-2$, denote
$$
   B_r= \left(\bigcup_{i=1}^{n-r-1}{\Gamma_{i}^{*^{(r)}}}\right) \bigcup \left( \bigcup_{k=3}^{n-r}{\bigcup_{j=2}^{k-1}{t_{j,k}^{(r)}}} \right)
$$
These are the subgraphs which are grouped together in \autoref{F:Gamma_5}. By the above arguments, we have shown that
$$
   v(B_0) > \lambda_n^{r}\; v(B_r)
$$
for $r=1,\ldots,n-2$. Hence:
$$
  v(B_1\sqcup\cdots \sqcup B_{n-2}) = v(B_1)+\cdots + v(B_{n-2})
$$
$$
  < \left(\lambda_n^{-1}+\lambda_n^{-2}+\cdots \lambda_n^{-(n-2)}\right) v(B_0) < v(B_0).
$$
Recall that the sequence $\{\lambda_n\}$ is increasing and, as we will see in \autoref{Tbl:computations}, $\lambda_3=2.086\cdots$. Hence, assuming $n>2$, we have $\lambda_n>2$ and the last inequality holds.

On the other hand, we already showed that
$$
  v(T_{1,n})>\lambda_n^{r}\; v(T_{1,n}^{(r)})
$$
for $r=1,\ldots,n-1$. Therefore
$$
  v(T_{1,n}^{(1)}\sqcup\cdots \sqcup T_{1,n}^{(n-1)})= v(T_{1,n}^{(1)})+\cdots + v(T_{1,n}^{(n-1)})
$$
$$
< \left(\lambda_n^{-1}+\lambda_n^{-2}+\cdots \lambda_n^{-(n-1)}\right) v(T_{1,n})< v(T_{1,n}).
$$
But we know that $sh_{\bullet}(\Gamma^*_{n-1})=(B_1\sqcup\cdots \sqcup B_{n-2} \sqcup T_{1,n}^{(1)}\sqcup\cdots \sqcup T_{1,n}^{(n-1)})$. Hence:
$$
   v(sh_{\bullet}(\Gamma^*_{n-1}))<v(B_0)+v(T_{1,n}).
$$
The three subgraphs in the above formula are disjoint, and cover the whole graph $\Gamma^*_n$. Hence:
$$
   v(sh_{\bullet}(\Gamma^*_{n-1}))+v(B_0)+v(T_{1,n}) = v(\Gamma^*_n)=1.
$$
Therefore:
$$
  v(B_0)+v(T_{1,n})>\frac{1}{2}.
$$
Finally, we see that $\mbox{Pred}(T_{1,n})=B_0\sqcup T_{1,n}$. And there is a single arrow (labeled $a_1$), from each state in $B_0\sqcup T_{1,n}$ to a state in $T_{1,n}$. This implies that
$$
  v(B_0)+v(T_{1,n}) = v(B_0\sqcup T_{1,n})=\lambda_n\; v(T_{1,n}).
$$
Hence, as $\lambda_n<4$ (\autoref{T:limit<4}):
$$
  v(T_{1,n})>\frac{1}{2\lambda_n}>\frac{1}{8}.
$$
\end{proof}

\begin{corollary}\label{C:1/32}
Let $P_{n,a_1}$ be the limit, when $k$ tends to infinity, of the proportion of braids $\beta\in A_n$ of length $k$, such that $F_n(\beta)=F_n(a_1)$. Then $P_{n,a_1}>\frac{1}{32}$ for every $n\geq 1$.
\end{corollary}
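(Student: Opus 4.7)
The plan is to identify the state of $\Gamma^*_n$ corresponding to $F_n(a_1)$, locate its direct predecessors, and then apply the Perron--Frobenius eigenvalue identity, combined with \autoref{T:generic_braids_end_with_a1} and \autoref{T:limit<4}.

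First I would apply \autoref{P:Transitions} to the initial set $F_n(1)=\emptyset$ with the letter $a_1$: conditions~(1)--(5) contribute nothing, condition~(6) yields the prefix $a_2a_1$ (for $n\geq 2$), and condition~(7) yields $a_3,\ldots,a_n$ (for $n\geq 3$). Hence $F_n(a_1)=\{a_2a_1,a_3,\ldots,a_n\}$, which in all cases equals $\psi_n((1,1,1,\emptyset))$; in the notation of the proof of \autoref{T:generic_braids_end_with_a1}, this is the state $t_{1,1}$. Consequently $P_{n,a_1}$ is exactly the $t_{1,1}$-coordinate $v(t_{1,1})$ of the normalized Perron--Frobenius eigenvector of $\Gamma^*_n$.

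Next I would read off $\mbox{Pred}(t_{1,1})$ inside $\Gamma^*_n$. Every arrow landing at $t_{1,1}$ carries its final letter $a_1$, so it suffices to scan the outgoing $a_1$-arrows of the five families of states analyzed in the proof of \autoref{P:bijection}: the block $sh_\bullet(\Gamma^*_{n-1})$ has a black circle at position $1$ and admits no $a_1$-arrow; each state of $T_{j,n}$ with $j\geq 2$ sends its $a_1$-arrow to $t_{1,j}\neq t_{1,1}$; each state of $\overline{sh(\Gamma^*_m)}$ sends its $a_1$-arrow to $t_{1,m+1}\neq t_{1,1}$; and each state of $T_{1,n}$ sends its $a_1$-arrow precisely to $t_{1,1}$. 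Therefore $\mbox{Pred}(t_{1,1})=T_{1,n}$, and evaluating the identity $vR_n=\lambda_n v$ at this coordinate yields
$$\lambda_n\, v(t_{1,1})\;=\;\sum_{s\in T_{1,n}}v(s)\;=\;v(T_{1,n})\;=\;P_{n,1}.$$

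Finally, combining $P_{n,1}>1/8$ from \autoref{T:generic_braids_end_with_a1} with $\lambda_n<\lambda_\infty<4$ from \autoref{T:limit<4} (using the strict monotonicity of $\{\lambda_n\}$, which bounds every $\lambda_n$ by $\lambda_\infty$) gives
$$P_{n,a_1}\;=\;\frac{P_{n,1}}{\lambda_n}\;>\;\frac{1}{8\lambda_n}\;>\;\frac{1}{32}$$
for every $n\geq 1$; in the small cases $n=1,2$ the bound $\lambda_n<4$ is trivial. I do not foresee any real obstacle: the corollary is essentially a one-step application of the eigenvalue equation, and the only mildly delicate point is to note that the arrows from the transient component $sh(\Gamma_{n-1})$ into $t_{1,1}$ are correctly excluded when one restricts to the recurrent subgraph $\Gamma^*_n$.
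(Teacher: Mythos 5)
Your proposal is correct and follows essentially the same route as the paper: identify $F_n(a_1)$ with the state $t_{1,1}=(1,1,1,\emptyset)$, observe that $\mathrm{Pred}(t_{1,1})=T_{1,n}$ inside the recurrent subgraph $\Gamma^*_n$, and conclude $P_{n,a_1}=\lambda_n^{-1}v(T_{1,n})>\tfrac{1}{8}\cdot\tfrac{1}{4}=\tfrac{1}{32}$ via \autoref{T:generic_braids_end_with_a1} and \autoref{T:limit<4}. Your verification that $F_n(a_1)=\{a_2a_1,a_3,\ldots,a_n\}=\psi_n((1,1,1,\emptyset))$ and your case-by-case scan of the $a_1$-arrows are details the paper leaves implicit, but the argument is the same.
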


\begin{proof}
The number $P_{n,a_1}$ is the limit, when $k$ tends to infinity, of the proportion of lexicographic representatives finishing at the state $F_n(a_1)$, that is, at $t_{1,1}$. In other words, $P_{n,a_1}=v(t_{1,1})$.

Now it suffices to notice that $\mbox{Pred}(t_{1,1})=T_{1,n}$. Hence, by \autoref{T:generic_braids_end_with_a1}:
$$
    v(t_{1,1})= \lambda_n^{-1} \; v(T_{1,n}) > \lambda_n^{-1}\;  \frac{1}{8} > \frac{1}{32}.
$$
\end{proof}

\section*{Appendix: an algorithm to generate transition matrices.}

In this paper we have described in detail the automaton $\Gamma_n$ which accepts the language of maximal lexicographic representatives for the braid monoid $A_n$. Thanks to this analysis, we have been able to develop a computer program that generates the primitive transition matrix $R_n$ corresponding to the subgraph $\Gamma_n^*$, from which the automaton $\Gamma_n$ can be easily obtained: just recall that $\Gamma_1=\Gamma_1^*$, and that $\Gamma_{n}$ consists of a copy of $\Gamma_{n-1}$ (that we denoted $sh(\Gamma_{n-1})$) and a copy of $\Gamma_n^*$, together with an arrow (labeled $a_1$) starting at each vertex of $sh(\Gamma_{n-1})$ and pointing to the first vertex of $\Gamma_n^*$. If one is interested in computing the actual automaton, the only information missing will be the labels of the arrows. These labels can be obtained from the proof of \autoref{P:bijection}.

Our main goal here is the computation of the matrix $R_n$, as its Perron-Frobenius eigenvalue corresponds to the growth rate of $A_n$, and its Perron-Frobenius left-eigenvector shows the proportion of maximal lexicographic representatives finishing at each state.

Recall (see \autoref{F:Gamma_5}) that the automaton $\Gamma_n^*$ is very similar to the automaton $sh_{\bullet}(\Gamma_n^*)$, the only difference being that the arrows labeled $a_1$ in $\Gamma_n^*$ point to the states $t_{1,i}$, and the arrows labeled $a_2$ in $sh_{\bullet}(\Gamma_n^*)$ point to either $\Gamma_1^*$ or to the states $t_{2,i}$. We will then use the same algorithm in both cases, with a flag (named `$closed$') indicating whether we are computing $\Gamma_n^*$ ($closed=1$) or $sh_{\bullet}(\Gamma_n^*)$ ($closed=0$).

We will order the rows of $R_n$ (that is, the states of $\Gamma_n^*$) in a systematic way. See \autoref{F:Gamma_5}. The first $n$ rows will correspond to the states $t_{1,1},\ldots,t_{1,n}$. The next rows will correspond to $sh_{\bullet}(\Gamma_{n-1}^*)$. And the final rows are ordered as they appear in \autoref{F:Gamma_5}, by columns: first $\Gamma_1^*, t_{2,3}, t_{2,4},\ldots, t_{2,n}$, then the states in $\Gamma_2^*, t_{3,4},\ldots, t_{3,n}$, and so on, until the final rows which correspond to the states of $\Gamma_{n-1}^*$. Inside $sh_{\bullet}(\Gamma_{n-1}^*)$ (whose states correspond to those of $\Gamma_{n-1}^*$), and inside each copy of $\Gamma_i^*$ for $i=1,\ldots,n-1$, the states will be ordered in the same way, by recurrence.

Notice that we can easily compute the position of any state in \autoref{F:Gamma_5} thanks to \autoref{T:numer_of_states}, as the number of states of $\Gamma_n^*$ is precisely $s_n-s_{n-1}$. We then need to pre-compute the numbers $s_1,\ldots,s_n$, where $s_i$ is the number of states in $\Gamma_i$. We will not write this explicitly, as it follows from the single formula in \autoref{T:numer_of_states}. Or, even better, from the formula in \autoref{P:recurrence_relation}. For $i=1,\ldots,n$, let $s_i^*=s_i-s_{i-1}$ be the number of states in $\Gamma_i^*$. We can assume we have computed all these numbers, for $i=1,\ldots,n$.

We will define a procedure ${\tt Submatrix}(R,j,H,s,closed)$, that will take a sparse matrix $R$ (that will eventually become $R_n$) and will compute the submatrix corresponding to either $\Gamma_{j}^*$ (if $closed=1$) or $sh_{\bullet}(\Gamma_j^*)$ (if $closed=0$). It will place that submatrix starting at position $(1+s,1+s)$ of $R$ ($s$ is just the shifting indicating where to place the submatrix).  The input $H$ is a vector that should be pre-computed, and contains the exact positions of the source states of the horizontal arrows in \autoref{F:Gamma_5} going from $\Gamma_{i}^*$ to $\Gamma_{i+1}^*$, for $i=1,\ldots,j-2$.

The numbers in $H$ are very particular, and the same numbers can be used for $i=1,\ldots,j-2$. That is why they can be pre-computed. By the proof of \autoref{P:bijection} we know that the mentioned arrows go from the segment configuration $(1,i+1,i+1,S)$ to the segment configuration $(1,i+2,i+2,S')$, where $S=\{[1,i+1],[i_2,i+1],\ldots,[i_t,i+1]\}$ and $S'=\{[1,i+2],[i_2,i+2],\ldots,[i_t,i+2]\}$. There are exactly $2^{i-1}$ such arrows (the set $\{i_2,\ldots,i_t\}$ can be any subset of $\{2,\ldots,i\}$). It turns out that the source states of these arrows correspond to the following (we will use the whole diagram in \autoref{F:Gamma_5} as a picture of $\Gamma_i^*$): first, the state appearing at the bottom-left of \autoref{F:Gamma_5} ($t_{1,1}^{(i-1)}$); then the only state of ${\Gamma_1^*}^{(i-2)}$; then the same two described states, but inside ${\Gamma_2^*}^{(i-3)}$; then the same four described states, but inside ${\Gamma_3^*}^{(i-4)}$; and so on. These positions configure the numbers in $H$. On the other hand, the target states of the arrows from $\Gamma_{i-1}^*$ to $\Gamma_i^*$ are actually a subset of the source states from $\Gamma_i^*$ to $\Gamma_{i+1}^*$; namely, they correspond to the entries of $H$ placed in odd positions.

Therefore, since the exact positions can be computed from the structure revealed in \autoref{F:Gamma_5} and the knowledge of the numbers $s_i^*$, the vector $H$ can be determined as follows:

\begin{algorithm}[ht]
\caption{Computing the vector $H$ used in the computation of $R_j$}
\label{A:H}
\small\begin{algorithmic}[1]
\REQUIRE{Integer $j\geq 2$.}
\ENSURE{A vector $H$ of length $2^{j-1}$ containing the positions in $\Gamma_{j}^*$ of the source states of the horizontal arrows in \autoref{F:Gamma_5}}\smallskip
\STATE{$H:=[1]$}
\FOR{$i := 2$ to $j$}
  \STATE{$x:=s_i^*-s_{i-1}^*-i$}
  \STATE{If $H=[h_1,\ldots,h_r]$, set $H':=[h_1+x, \ldots, h_r+x]$}
  \STATE{$H:=$concatenation of $H$ and $H'$.}
\ENDFOR
\RETURN{$H$}
\end{algorithmic}\vskip0.5ex
\end{algorithm}

As an example, if $j=4$, the algorithm will produce $H=[0, 1, 6, 7, 21, 22, 27, 28]$. In $\Gamma_5^*$ (see \autoref{F:Gamma_5}), if we consider that the position $0$ corresponds to the bottom state $t_{1,1}^{(4)}$, then these positions correspond respectively to the state $t_{1,1}^{(4)}$, the only state in ${\Gamma_1^*}^{(3)}$, the two bottom states in ${\Gamma_2^*}^{(2)}$, and the four states repeating the same pattern in ${\Gamma_3^*}^{(1)}$.  Notice that for $j=3$ we have $H=[0,1,6,7]$. So having computed $H$ for some $j$, we have computed it for all smaller indices.

Inside each $\Gamma_i^*$, one needs to locate the bottom state (corresponding to 0). But this is easy, as it is the state number $i+1 \choose 2$ in $\Gamma_i^*$.  This means that in any automaton in which we have horizontal arrows from $\Gamma_3^*$ to $\Gamma_4^*$, the arrows will start at positions $\left[0+{4\choose 2},1+{4\choose 2},6+{4\choose 2},7+{4\choose 2}\right]=[6,7,12,13]$ (inside $\Gamma_3^*$), and will finish at positions $\left[0+{5\choose 2},6+{5\choose 2},21+{5\choose 2},27+{5\choose 2}\right]=[10,16,31,37]$ (inside $\Gamma_4^*$).

With this information, and following the pattern of \autoref{F:Gamma_5}, we provide in Algorithm~\ref{A:R_n} the routine that allows to compute $R_n$.

\begin{algorithm}[!ht]
\caption{{\tt Submatrix}$(R,j,H,s,closed)$.}
\label{A:R_n}
\small\begin{algorithmic}[1]
\REQUIRE{Sparse matrix $R$, integer $j\geq 1$, vector of integers $H$, integer $s\geq 0$, bit $closed\in\{0,1\}$}
\ENSURE{Transforms the matrix $R$, inserting 1's in the submatrix of size $s_j^*\times s_j^*$ whose upper left corner is $(1+s,1+s)$, at the positions corresponding either to the arrows of $\Gamma_j^*$ (if $closed=1$), or to the arrows starting at $s_{\bullet}(\Gamma_j^*)$ (if $closed=0$).}
\smallskip
\IF{$closed=1$}
  \FOR{$i:=1$ to $j$}
    \STATE{$R[i+s, 1+s] := 1$ \hfill (Arrows from $t_{1,i}$ to $t_{1,1}$)}
  \ENDFOR
\ELSE
  \FOR{$i:=1$ to $j$}
    \STATE{$R[i+s, 1+s_j^*+s] := 1$ \hfill (Arrows from $t_{1,i}^{(1)}$ to $\Gamma_1^*$)}
  \ENDFOR
\ENDIF
\IF{$j>1$}
  \STATE{$R[1+s, j+1+s] := 1$ \hfill (Arrow from $t_{1,1}$ to $t_{1,1}^{(1)}$)}
\ENDIF
\FOR{$i:=3$ to $j$}
  \STATE{$R[i+s, j+i-1+s] := 1$ \hfill (Arrows from $t_{1,i}$ to $t_{1,i}^{(1)}$)}
\ENDFOR
\STATE{Call {\tt Submatrix}$(R, j-1, H, s+j, 0)$ \hfill (Arrows starting at $sh_{\bullet}(\Gamma_{j-1}^*)$)}
\STATE{$s' := s+j+s_{j-1}^*$ \hfill ($s'+1=$position of $\Gamma_1^*$)}
\FOR{$i:=1$ to $j-1$}
  \STATE{Call {\tt Submatrix}$(R, i, H, s', 1)$ \hfill (Arrows in $\Gamma_{i}^*$)}
  \IF{$i=1$}
    \FOR{$k:=1$ to $j-2$}
      \STATE{$R[s'+1+k,s'+1]:=1$ \hfill (Arrows from $t_{2,k+2}$ to $\Gamma_1^*$)}
    \ENDFOR
  \ELSE
    \FOR{$k:=1$ to $j-i-1$}
      \STATE{$R[s'+s_i^*+k,s'+{i+1\choose 2}+1]:=1$ \hfill (Arrows from $t_{i+1,i+k+1}$ to $\Gamma_{i}^*$)}
    \ENDFOR
  \ENDIF
  \FOR{$k:=2$ to $j-i-1$}
    \STATE{$R[s'+s_i^*+k,s'+s_i^*+k+s_{i+1}^*+j-i-2]:=1$  \hfill (Arrows from $t_{i+1,i+k+1}$ to $t_{i+2,i+k+1}$)}
  \ENDFOR
  \IF{$closed=1$}
    \FOR{$k:=s'+1$ to $s'+s_i^*+j-i-1$}
      \STATE{$R[k,s+i+1]:=1$ \hfill (Arrows from all states of $\Gamma_i^*$ and $t_{i+1,r}$ to $t_{1,i+1}$)}
    \ENDFOR
  \ELSE
    \FOR{$k:=s'+1$ to $s'+s_i^*+j-i-1$}
      \STATE{$R[k,s+s_j^*+i+1]:=1$   \hfill (Arrows from all states of ${\Gamma_i^*}^{(1)}$ and $t_{i+1,r}^{(1)}$ to $t_{2,i+2}$)}
    \ENDFOR
  \ENDIF
  \IF{$i<j-1$}
    \FOR{$k:=1$ to $2^{i-1}$}
      \STATE{$R[s'+{i+1\choose 2}+H[k],s'+s_i^*+j-i-1+{i+2\choose 2}+H[2k-1]]:=1$  \hfill (Arrows from $\Gamma_i^*$ to $\Gamma_{i+1}^*$)}
    \ENDFOR
  \ENDIF
  \STATE{$s':=s'+s_i^*+j-i-1$}
\ENDFOR
\end{algorithmic}\vskip0.5ex
\end{algorithm}

Therefore, in order to compute the transition matrix $R_n$, one just needs to do the following:
\begin{enumerate}

\item Compute $s_1^*,\ldots,s_n^*$, using \autoref{P:recurrence_relation} (where $s_i^*=s_i-s_{i-1}$).

\item Compute $H$ using Algorithm~\ref{A:H}, taking $j=n-1$.

\item Initialize $R$ as a $s_n^*\times s_n^*$ sparse zero matrix.

\item Call {\tt Submatrix}$(R,n,H,0,1)$.

\end{enumerate}

We know that the number of rows of $R_n$ grows exponentially with respect to $n$, so we cannot compute these matrices for big values of $n$; however, we can provide the computations for $n=2,\ldots,9$. We compute in this case the matrix $R_n$, the Perron-Frobenius eigenvalue $\lambda_n$ (which is the growth rate of $A_n$), the first entry of the Perron-Frobenius eigenvector whose coordinates add up to 1 (which is equal to $P_{n,a_1}$), and the sum of the first $n$ entries of the same eigenvector (which is equal to $P_{n,1}$). The results of this computation are shown in \autoref{Tbl:computations}.

\begin{table}[!ht]\begin{center}
\begin{tabular}{@{}r@{\quad }r|ccc@{\,}}
         &       & $\lambda_n$ &  $P_{n,a_1}$ & $P_{n,1}$ \\
  \hline\rule{0pt}{11pt}
         &    2  & 1.61803398874989535 & 0.309016994387306732 & 0.5 \\
         &    3  & 2.08679122278138296 & 0.179072361848063216 & 0.3736866329 \\
         &    4  & 2.39485036123379746 & 0.134155252415486176 & 0.3212817547 \\
     n   &    5  & 2.59937733237127854 & 0.113418385255364101 & 0.2948171798 \\
         &    6  & 2.73962959897194480 & 0.102094618000846169 & 0.2797014374 \\
         &    7  & 2.83910705543066832 & 0.095188754079773799 & 0.2702510632 \\
         &    8  & 2.91185367833772002 & 0.090638078480376610 & 0.2639248222 \\
         &    9  & 2.96648976449784296 & 0.087464812090583224 & 0.2594634699
\end{tabular}\end{center}\vspace*{-3ex}
\caption{Growth rates for $A_n$, the proportion of lexicographic representatives in $A_n$ finishing at the initial state of $\Gamma_n^*$, and the proportion of lexicographic representatives finishing with $a_1$.}\label{Tbl:computations}
\end{table}

It can be checked that the values of $\lambda_n$ coincide with the inverses of the smallest real roots of the polynomials $H_n(x)$ which determine the growth series $\frac{1}{H_n(x)}$ of the monoid $A_n$ (see~\cite{FG}). In other words, the computed values $\lambda_n$ correspond, as they should, to the radius of convergence of the growth series of $A_n$.

On the other hand, one can see that the values of $P_{n,a_1}$ are decreasing, and greater than $1/32=0.03125$ (as stated in \autoref{C:1/32}), and that the values of $P_{n,1}$ are also decreasing, and greater than $1/8=0.125$ (as stated in \autoref{T:generic_braids_end_with_a1}). Notice that, as expected, the values in the third column of \autoref{Tbl:computations} are precisely the product of the values in the first two columns. That is, $P_{n,1}=\lambda_n P_{n,a_1}$.

\begin{remark}
In~\cite{FG} we show, using the results in this paper, that the sequence $\{\lambda_n\}_{n\geq 1}$ tends to $3.233636\ldots$ Therefore, we have $\lambda_n<3.233637$. This allows to improve our lower bounds, replacing $4$ by $3.233637$, and obtaining:
$$
      P_{n,a_1}>0.0478 \qquad P_{n,1}>0.1546
$$
\end{remark}


{\bf Ram\'on Flores.}\\
ramonjflores@us.es\\
Depto. de Geometr\'{\i}a y Topolog\'{\i}a. Instituto de Matem\'aticas (IMUS). \\
Universidad de Sevilla.  Av. Reina Mercedes s/n, 41012 Sevilla (Spain).

\medskip

{\bf Juan Gonz\'alez-Meneses.}\\
meneses@us.es\\
Depto. de \'Algebra. Instituto de Matem\'aticas (IMUS). \\
Universidad de Sevilla.  Av. Reina Mercedes s/n, 41012 Sevilla (Spain).

\end{document}